\newcommand{\no}[1]{#1}
\renewcommand{\no}[1]{}
\renewcommand{\Delta}{\upDelta}}
\date{\today}
\newtheorem{theorem}{Theorem}[section]
\newtheorem{proposition}{Proposition}[section]
\newtheorem{lemma}{Lemma}[section]
\newtheorem{corollary}{Corollary}[section]
\theoremstyle{remark}
\newtheorem{remark}{Remark}[section]
\newcommand{\bel}{\begin{equation} \label}
\newcommand{\ee}{\end{equation}}
\newcommand{\R}{{\mathbb R}}
\def\phi {\varphi}
\renewcommand{\leq}{\leqslant}
\renewcommand{\geq}{\geqslant}
\def\beq{\begin{equation}}
\def\eeq{\end{equation}}
\newcommand{\bea}{\begin{eqnarray}}
\newcommand{\eea}{\end{eqnarray}}
\newcommand{\beas}{\begin{eqnarray*}}
\newcommand{\eeas}{\end{eqnarray*}}
\providecommand{\abs}[1]{\left\lvert#1\right\rvert}
\providecommand{\norm}[1]{\left\lVert#1\right\rVert}
\numberwithin{equation}{section}
\title[Calder\'on problem]{Lecture on Calder\'on problem}
\author[Yavar Kian]{Yavar Kian}
\address{Aix Marseille Univ, Universit\'e de Toulon, CNRS, CPT, Marseille, France}
\email{yavar.kian@univ-amu.fr}
\begin{document}

\begin{abstract}
We consider  the so called Calder\'on problem which corresponds to the determination of a conductivity appearing in an elliptic equation from boundary measurements. Using several known results we propose a simplified and self contained proof of this result.
\end{abstract}

\maketitle

\tableofcontents


\section{Introduction}

 A. P. Calder\'on published in 1980  the  pioneer contribution \cite{C} which motivated many applications in inverse problems.  The problem considered by  A. P. Calder\'on can be formulated as follows " can one  determine the electrical conductivity of a medium by making voltage and current measurements at the boundary of the medium?".  
This problem is known as the Electrical Impedance Tomography (EIT in short). The work of Calder\'on was motivated by oil prospection. Since then, the EIT problem received a lot of attention among the mathematical community with several applications in  medical imaging and geophysical prospection (see \cite{J,Z}). We refer to \cite{Uh} for an overview of recent development for this problem. In this lecture we consider several results related to this problem.

We start with the first positive answer given to the question raised by Calder\'on in \cite{SU}. Following several arguments borrowed in \cite{Ch,Ha,Ka,Ki,SU} we propose a simple proof of this result which can be considered as an introduction to this field.

We consider also some partial data result associated with this problem corresponding to the same problem with excitation and measurements restricted to some portion of the boundary borrowed from \cite{AU,BU}.

\section{The Calder\'on problem with full data}

Let $\Omega$ be a bounded domain of $\mathbb{R}^n$, $n\geq 3$, with $C^2$ boundary $\partial\Omega$. We denote by $\nu(x)$ the outward unit normal to $\partial\Omega$ computed at $x \in \partial\Omega$. We consider $a\in W^{2,\infty}(\Omega)$ such that there exists $a_0>0$ for which the condition
\bel{a0}
\forall x\in\Omega,\quad a(x)\geq a_0
\ee
is fulfilled. For all $\phi\in H^{\frac{3}{2}}(\partial\Omega)$ we consider the boundary value problem
\bel{eq1}
\left\{
\begin{array}{ll}
-\textrm{div}(a\nabla u)=0  & \mbox{in}\ \Omega ,
\\
u=\phi &\mbox{on}\ \partial\Omega.
\end{array}
\right.
\ee
It is well known that this problem admits a unique solution $u_{a,\phi}\in H^2(\Omega)$. Therefore, we can define the Dirichlet-to-Neumann map (DN map in short) associated with \eqref{eq2} given by
$$\mathcal N_a:H^{\frac{3}{2}}(\partial\Omega)\ni\phi\mapsto a\partial_\nu u_{a,\phi}\in H^{\frac{1}{2}}(\partial\Omega)$$
which is a bounded operator. In this context, the Calder\'on problem corresponds to the determination of the conductivity $a$ from the knowledge of $\mathcal N_a$. We consider here the unique recovery of $a$ from $\mathcal N_a$ which can be stated as follows.

\begin{theorem}
\label{t1}
For $j=1,2$, let $a_j \in W^{2,\infty}(\Omega)$ satisfy \eqref{a0} and the condition
\bel{t1a} a_1(x)=a_2(x),\quad \nabla a_1(x)=\nabla a_2(x),\quad x\in\partial\Omega.\ee
Then we have
\bel{t1b} \mathcal N_{a_1}=\mathcal N_{a_2}\Longrightarrow a_1=a_2.\ee

 \end{theorem}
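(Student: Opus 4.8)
The standard route is to reduce the conductivity equation to a Schrödinger equation via the substitution $u = a^{-1/2}v$, which turns $-\mathrm{div}(a\nabla u)=0$ into $(-\Delta + q_a)v = 0$ with $q_a = a^{-1/2}\Delta(a^{1/2})$ (here we use $a\in W^{2,\infty}$ so $q_a\in L^\infty$). The boundary condition \eqref{t1a} guarantees that $a_1^{1/2}=a_2^{1/2}$ and $\partial_\nu a_1^{1/2}=\partial_\nu a_2^{1/2}$ on $\partial\Omega$, and a short computation then shows that $\mathcal N_{a_1}=\mathcal N_{a_2}$ implies equality of the DN maps $\Lambda_{q_{a_1}}=\Lambda_{q_{a_2}}$ for the two Schrödinger operators. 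So it suffices to prove the Schrödinger-type uniqueness statement: $\Lambda_{q_1}=\Lambda_{q_2}\Rightarrow q_1=q_2$, and then recover $a$ from $q_a$ by solving the elliptic equation $\Delta(a^{1/2}) = q_a a^{1/2}$ with Cauchy data $a^{1/2}=1$-type normalization on the boundary (more precisely, using \eqref{t1a} the difference $a_1^{1/2}-a_2^{1/2}$ solves a homogeneous problem with zero Cauchy data, hence vanishes).

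The heart of the argument is the classical Sylvester–Uhlmann integral identity together with complex geometric optics (CGO) solutions. First I would establish the identity $\int_\Omega (q_1-q_2)v_1 v_2\,\dd x = 0$ for all $v_j$ solving $(-\Delta+q_j)v_j=0$ in $\Omega$, obtained by pairing the equations against each other and using $\Lambda_{q_1}=\Lambda_{q_2}$ to kill the boundary terms (extending $q_j$ by, say, a common $L^\infty$ extension to a large ball so the construction below makes sense). Next, for a fixed $\xi\in\R^n$ I would choose $\zeta_1,\zeta_2\in\C^n$ with $\zeta_1+\zeta_2 = i\xi$ (or rather $= -i\xi$ with the appropriate sign convention), $\zeta_j\cdot\zeta_j = 0$, and $|\zeta_j|\to\infty$; concretely, picking vectors $\eta,k\perp\xi$ with $|\eta|=1$ one sets $\zeta_1 = \eta\tau + i(\tfrac{k}{2}? \ldots)$ — the standard choice with $|\zeta_j|\sim\tau$. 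Then I would invoke the CGO construction: for $|\zeta|$ large enough there exist solutions $v_j = e^{x\cdot\zeta_j}(1+\psi_j)$ with $\|\psi_j\|_{L^2(\Omega)}\le C/|\zeta_j|$, which relies on a Faddeev-type resolvent estimate $\|(-\Delta - 2\zeta\cdot\nabla)^{-1}f\|_{L^2}\le C|\zeta|^{-1}\|f\|_{L^2}$. Substituting into the integral identity gives $\int_\Omega (q_1-q_2)e^{-ix\cdot\xi}(1+\psi_1)(1+\psi_2)\,\dd x = 0$; letting $\tau\to\infty$, the remainder terms vanish and one obtains $\widehat{(q_1-q_2)}(\xi)=0$ for every $\xi$, hence $q_1=q_2$.

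Finally I would translate back: $q_{a_1}=q_{a_2}$ means $a_1^{1/2}$ and $a_2^{1/2}$ both satisfy $\Delta(a_j^{1/2}) = q\, a_j^{1/2}$ in $\Omega$ with, by \eqref{t1a}, identical Dirichlet and Neumann boundary data, so the difference $w = a_1^{1/2}-a_2^{1/2}$ solves a linear elliptic equation $\Delta w = q w$ with $w=\partial_\nu w = 0$ on $\partial\Omega$; a standard unique continuation / energy argument (or just: $w$ extends by zero to a solution on a larger domain, then Carleman-type or even elementary integration-by-parts if $q\ge$ something, but unique continuation from Cauchy data on an open boundary piece suffices here since the data is on all of $\partial\Omega$) forces $w\equiv 0$, i.e. $a_1=a_2$.

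I expect the main obstacle to be the construction and estimation of the CGO solutions — specifically proving the $L^2$ resolvent bound for the conjugated Laplacian $-\Delta - 2\zeta\cdot\nabla$ uniformly as $|\zeta|\to\infty$ and then solving the fixed-point equation $\psi = -(-\Delta-2\zeta\cdot\nabla)^{-1}(q(1+\psi))$ in $L^2(\Omega)$. The boundary-term bookkeeping (justifying that $\Lambda_{q_1}=\Lambda_{q_2}$ really does eliminate all boundary contributions in the integral identity, given the regularity of the CGO solutions up to the boundary) is a secondary technical point I would need to handle carefully, as is the precise extension of the potentials outside $\Omega$; but these are routine compared with the Faddeev resolvent estimate, which is the analytic core of the whole theorem.
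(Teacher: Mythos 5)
Your proposal is correct and follows essentially the same route as the paper: the Liouville substitution $v=a^{1/2}u$ reducing to the Schr\"odinger problem with $q_a=a^{-1/2}\Delta(a^{1/2})$, the Sylvester--Uhlmann orthogonality identity, CGO solutions built from an $\mathcal O(|\zeta|^{-1})$ bound for the conjugated Laplacian (the paper obtains exactly this estimate via H\"ahner's periodic Fourier-series construction on a cube), Fourier inversion to get $q_{a_1}=q_{a_2}$, and finally the observation that $a_1^{1/2}-a_2^{1/2}$ solves a homogeneous elliptic equation with vanishing boundary data. The only cosmetic differences are that the paper concludes the last step from the zero Dirichlet trace alone (using $q_2\in\mathcal Q$, so no unique continuation is needed) and places the $e^{-i\xi\cdot x}$ factor in the amplitude of one CGO solution rather than splitting $\xi$ between the two phases.
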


\begin{remark} Note that applying the result of \cite{KV}, one can remove the condition \eqref{t1a} in Theorem \ref{t1}. In these notes we will not consider the work of  \cite{KV} dealing with  the unique recovery of the conductivity at the boundary from the knowledge of the DN map.
\end{remark}

\subsection{An associated inverse problem}
Let $q\in L^\infty(\Omega)$ take values in $\R$ and consider for each $\phi\in H^{\frac{3}{2}}(\partial\Omega)$  the boundary value problem
\bel{eq2}
\left\{
\begin{array}{ll}
-\Delta v+qv=0  & \mbox{in}\ \Omega ,
\\
v=\phi &\mbox{on}\ \partial\Omega.
\end{array}
\right.
\ee
Assuming that for all $w\in H^1_0(\Omega)\cap H^2(\Omega)$   the implication
\bel{q}-\Delta w+qw=0\Longrightarrow w=0\ee
holds true, one can prove that problem \eqref{eq2} admits a unique solution $v_{q,\phi}\in H^2(\Omega)$.
We define $\mathcal Q$ as the subset of all $q\in L^\infty(\Omega)$ such that for all $w\in H^1_0(\Omega)\cap H^2(\Omega)$ the implication \eqref{q} is fulfilled. For all $q\in\mathcal Q$, we define the Dirichlet-to-Neumann map (DN map in short) associated with \eqref{eq2} given by
$$\Lambda_q:H^{\frac{3}{2}}(\partial\Omega)\ni\phi\mapsto \partial_\nu v_{q,\phi}\in H^{\frac{1}{2}}(\partial\Omega).$$
The Calder\'on problem is strongly connected with the determination of $q$ from  $\Lambda_q$. In terms of uniqueness this result can be stated as follows.

\begin{theorem}
\label{t2}
For $j=1,2$, let $q_j \in \mathcal Q$.
Then we have
\bel{t2a} \Lambda_{q_1}=\Lambda_{q_2}\Longrightarrow q_1=q_2.\ee

 \end{theorem}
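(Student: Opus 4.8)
\medskip
\noindent\textbf{A proof strategy for Theorem \ref{t2}.}\quad
The plan is to run the complex geometric optics (CGO) argument of Sylvester and Uhlmann. First I would reduce the statement to an integral identity: assuming $\Lambda_{q_1}=\Lambda_{q_2}$, I claim that
\bel{plan1}\int_\Omega(q_1-q_2)\,v_1v_2\,dx=0\ee
for every pair $v_j\in H^2(\Omega)$ with $-\Delta v_j+q_jv_j=0$ in $\Omega$. To obtain \eqref{plan1}, given such a $v_1$ put $\phi:=v_1|_{\partial\Omega}\in H^{\frac{3}{2}}(\partial\Omega)$ and $w:=v_{q_2,\phi}$ (well defined since $q_2\in\mathcal Q$); since $q_1\in\mathcal Q$, uniqueness gives $v_1=v_{q_1,\phi}$, hence $\partial_\nu v_1=\Lambda_{q_1}\phi=\Lambda_{q_2}\phi=\partial_\nu w$ on $\partial\Omega$. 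Therefore $r:=v_1-w\in H^1_0(\Omega)\cap H^2(\Omega)$ satisfies $r=0$ and $\partial_\nu r=0$ on $\partial\Omega$, and $-\Delta r+q_2r=(q_2-q_1)v_1$ in $\Omega$. Multiplying this equation by $v_2$ and using Green's second identity, every boundary contribution drops out because $r$ and $\partial_\nu r$ vanish on $\partial\Omega$, while $-\Delta v_2+q_2v_2=0$ annihilates the remaining interior term, which yields \eqref{plan1}.

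Next I would produce enough solutions $v_j$. Fix a ball $B$ with $\overline\Omega\subset B$ and extend $q_1,q_2$ by $0$ on $\R^n\setminus\Omega$. For $\zeta\in\C^n$ with $\zeta\cdot\zeta=0$ and $|\zeta|$ large I look for a solution of $-\Delta v+q_jv=0$ in $B$ of the form $v=e^{x\cdot\zeta}(1+\psi_{j,\zeta})$. Conjugating by $e^{-x\cdot\zeta}$ turns the equation into $(\Delta+2\zeta\cdot\nabla)\psi_{j,\zeta}=q_j(1+\psi_{j,\zeta})$ in $B$, which I solve by a Neumann series based on the Faddeev-type bound: for $\zeta\cdot\zeta=0$ and $|\zeta|\geq1$ there is a right inverse $G_\zeta$ of $\Delta+2\zeta\cdot\nabla$ on $L^2(B)$ with $\|G_\zeta\|_{L^2(B)\to L^2(B)}\leq C|\zeta|^{-1}$. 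As soon as $C\|q_j\|_{L^\infty}|\zeta|^{-1}<1$, the operator $\mathrm{id}-G_\zeta q_j$ is invertible on $L^2(B)$, producing $\psi_{j,\zeta}$ with $\|\psi_{j,\zeta}\|_{L^2(B)}\leq C'|\zeta|^{-1}$, and elliptic regularity gives $v\in H^2(\Omega)$.

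Now fix $\xi\in\R^n$. Since $n\geq3$ I can choose $e_1,e_2\in\R^n$ orthonormal and orthogonal to $\xi$, and for large $t$ set
\bel{plan2}\zeta_1=\frac{i\xi}{2}+te_1+i\sqrt{t^2-|\xi|^2/4}\;e_2,\qquad \zeta_2=\frac{i\xi}{2}-te_1-i\sqrt{t^2-|\xi|^2/4}\;e_2,\ee
so that $\zeta_1\cdot\zeta_1=\zeta_2\cdot\zeta_2=0$, $\zeta_1+\zeta_2=i\xi$, and $|\zeta_1|^2=|\zeta_2|^2=2t^2\to\infty$. Inserting the CGO solutions $v_1$ (associated to $q_1$ and $\zeta_1$) and $v_2$ (associated to $q_2$ and $\zeta_2$) into \eqref{plan1} gives
\[0=\int_\Omega(q_1-q_2)\,e^{ix\cdot\xi}\,(1+\psi_{1,\zeta_1})(1+\psi_{2,\zeta_2})\,dx.\]
Letting $t\to\infty$, the terms carrying a factor $\psi_{j,\zeta_j}$ vanish by the Cauchy--Schwarz inequality together with $q_1-q_2\in L^\infty(\Omega)$ and $|\Omega|<\infty$; we are left with $\int_\Omega(q_1-q_2)e^{ix\cdot\xi}\,dx=0$ for all $\xi\in\R^n$. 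Thus the Fourier transform of the extension of $q_1-q_2$ by $0$ vanishes identically, whence $q_1=q_2$ a.e.\ in $\Omega$.

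The step I expect to be the genuine obstacle is the CGO construction, i.e.\ the resolvent estimate $\|G_\zeta\|_{L^2(B)\to L^2(B)}\leq C|\zeta|^{-1}$ uniform over $\{\zeta\in\C^n:\zeta\cdot\zeta=0,\ |\zeta|\geq1\}$. This is the only place where real analysis genuinely enters: via the Fourier transform it amounts to a sharp weighted bound for the multiplier $(|\eta|^2+2i\zeta\cdot\eta)^{-1}$ near its characteristic set (a codimension-two sphere in $\eta$), supplemented by a density/limiting argument. Everything else -- the identity \eqref{plan1}, the Neumann series, the algebraic choice \eqref{plan2}, and the passage to the limit -- is routine.
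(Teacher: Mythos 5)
Your overall strategy is the classical Sylvester--Uhlmann argument and coincides, at the structural level, with the paper's proof: the integral identity $\int_\Omega(q_1-q_2)v_1v_2\,dx=0$ is derived exactly as here (your $r=v_1-w$ is the paper's $v=y_2-v_1$ up to sign), the two CGO solutions are chosen so that the product of their exponential factors is $e^{\pm ix\cdot\xi}$ with the large parameter cancelling, and the conclusion follows from the injectivity of the Fourier transform. Your parametrization by isotropic vectors $\zeta_1,\zeta_2$ with $\zeta_1+\zeta_2=i\xi$ is equivalent to the paper's choice in \eqref{CGO1}--\eqref{CGO2}, where the factor $e^{-i\xi\cdot x}$ is carried in the amplitude of $v_1$ rather than split between the two phases; this is immaterial.

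The genuine difference, and the one place where your write-up is incomplete, is the construction of the remainders. You reduce everything to the uniform resolvent bound $\norm{G_\zeta}_{L^2(B)\to L^2(B)}\leq C|\zeta|^{-1}$ for a right inverse of $\Delta+2\zeta\cdot\nabla$, correctly identify it as the analytic core, but do not prove it; as you note, on $\R^n$ it requires a weighted estimate for the multiplier $(|\eta|^2+2i\zeta\cdot\eta)^{-1}$ near its characteristic sphere, which is the hardest step of the original Sylvester--Uhlmann paper. The paper avoids this entirely by following H\"ahner: it encloses $\Omega$ in a cube $Q$ and inverts the conjugated operator in the shifted Fourier basis $\phi_\alpha=(2R)^{-n/2}e^{i\pi\eta_1\cdot x/2R}e^{i\pi\alpha\cdot Sx/R}$. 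The half-integer shift forces the imaginary part of the symbol, $\frac{2\pi\rho}{R}(S^*\alpha\cdot\eta_1+\frac12)$, to be bounded below by $\frac{\pi\rho}{R}$ on the whole lattice, so the characteristic set is simply never met and the bound $\mathcal O(\rho^{-1})$ in $L^2$ (and $\mathcal O(\rho)$ in $H^2$, needed to make sense of the traces) follows from elementary summation (Lemma \ref{l1}). If you want a self-contained proof, you should either supply the Faddeev multiplier estimate or substitute this periodic construction; everything else in your plan is sound and matches the paper.
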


Note that using the so called Liouville transform, one can deduce Theorem \ref{t1} from Theorem \ref{t2} in the following way.

\begin{corollary}\label{c1} Assume that the conditions of Theorem \ref{t1} are fulfilled and Theorem \ref{t2} holds true. Then we have \eqref{t1b}. \end{corollary}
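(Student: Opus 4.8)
The plan is to use the classical Liouville (or Sylvester--Uhlmann) substitution $v = a^{1/2} u$ to convert the conductivity equation $-\mathrm{div}(a\nabla u)=0$ into a Schrödinger equation $-\Delta v + q_a v = 0$ with potential $q_a = a^{-1/2}\Delta(a^{1/2}) = \frac{\Delta a}{2a} - \frac{|\nabla a|^2}{4a^2}$. Since $a_j \in W^{2,\infty}(\Omega)$ with $a_j \geq a_0 > 0$, the potentials $q_{a_j}$ are in $L^\infty(\Omega)$; one should first check that they lie in $\mathcal Q$, i.e.\ that the implication \eqref{q} holds for $q_{a_j}$. This follows because if $-\Delta w + q_{a_j} w = 0$ with $w \in H^1_0 \cap H^2$, then $w_0 := a_j^{-1/2} w$ solves $-\mathrm{div}(a_j \nabla w_0) = 0$ with zero Dirichlet data (using $a_j^{1/2} \in W^{2,\infty}$ so the product is legitimate in $H^2$), and uniqueness for \eqref{eq1} forces $w_0 = 0$, hence $w = 0$.

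Next I would relate the two DN maps. For $\phi \in H^{3/2}(\partial\Omega)$, writing $u = u_{a_j,\phi}$, the function $v := a_j^{1/2} u$ solves \eqref{eq2} with $q = q_{a_j}$ and boundary value $a_j^{1/2}|_{\partial\Omega}\,\phi$. Computing the normal derivative on the boundary, $\partial_\nu v = a_j^{1/2}\partial_\nu u + \tfrac12 a_j^{-1/2}(\partial_\nu a_j) u$, so on $\partial\Omega$ one gets the operator identity
\bel{c1eq}
\Lambda_{q_{a_j}}\phi = a_j^{-1/2}\,\mathcal N_{a_j}\!\left(a_j^{-1/2}\phi\right) + \tfrac12\, a_j^{-1}(\partial_\nu a_j)\,\phi ,
\ee
after reparametrising the boundary data. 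Here is where hypothesis \eqref{t1a} enters decisively: since $a_1 = a_2$ and $\nabla a_1 = \nabla a_2$ on $\partial\Omega$, all the boundary coefficients $a_j^{\pm 1/2}$ and $a_j^{-1}\partial_\nu a_j$ appearing in \eqref{c1eq} coincide for $j=1,2$. Therefore $\mathcal N_{a_1} = \mathcal N_{a_2}$ implies $\Lambda_{q_{a_1}} = \Lambda_{q_{a_2}}$.

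By Theorem \ref{t2} we conclude $q_{a_1} = q_{a_2}$, i.e.\ $a_1^{-1/2}\Delta(a_1^{1/2}) = a_2^{-1/2}\Delta(a_2^{1/2})$ in $\Omega$. Setting $w := a_1^{1/2} - a_2^{1/2}$, this is a linear second-order elliptic equation for $w$ (the difference of the two equations $\Delta(a_j^{1/2}) = q_{a_1} a_j^{1/2}$ becomes $\Delta w = q_{a_1} w$), and \eqref{t1a} gives $w = 0$ and $\partial_\nu w = 0$ on $\partial\Omega$; more simply, $a_1 = a_2$ on $\partial\Omega$ already gives $w|_{\partial\Omega}=0$, and then uniqueness for the Dirichlet problem associated with $-\Delta + q_{a_1}$ (again using $q_{a_1}\in\mathcal Q$) forces $w \equiv 0$, hence $a_1 = a_2$. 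The main obstacle, such as it is, is purely bookkeeping: justifying that all the above manipulations are valid at the stated regularity ($W^{2,\infty}$ conductivities, $H^2$ solutions, $H^{1/2}$ traces) and carefully tracking the boundary reparametrisation $\phi \mapsto a_j^{1/2}\phi$ so that the identity \eqref{c1eq} is between genuinely comparable operators; the conceptual content is entirely contained in Theorem \ref{t2}.
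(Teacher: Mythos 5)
Your proposal is correct and follows essentially the same route as the paper: the Liouville substitution $v=a^{1/2}u$ with $q_{a_j}=a_j^{-1/2}\Delta(a_j^{1/2})$, the verification that $q_{a_j}\in\mathcal Q$ via uniqueness for \eqref{eq1}, the use of \eqref{t1a} to transfer $\mathcal N_{a_1}=\mathcal N_{a_2}$ into $\Lambda_{q_{a_1}}=\Lambda_{q_{a_2}}$, and the final uniqueness argument for $w=a_1^{1/2}-a_2^{1/2}$. The only cosmetic difference is that you state the DN-map relation as an explicit operator identity with the reparametrisation $\phi\mapsto a_j^{-1/2}\phi$, whereas the paper builds that reparametrisation into the choice of boundary data from the outset; the content is identical.
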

\begin{proof} We assume that $\mathcal N_{a_1}=\mathcal N_{a_2}$ holds true and we will prove that $a_1=a_2$.
For $\phi\in H^{\frac{3}{2}}(\partial\Omega)$ and $j=1,2$, fix $u_j\in H^2(\Omega)$ solving
$$
\left\{
\begin{array}{ll}
-\textrm{div}(a_j\nabla u_j)=0  & \mbox{in}\ \Omega ,
\\
u_j=a_1^{-\frac{1}{2}}\phi &\mbox{on}\ \partial\Omega.
\end{array}
\right.$$
Consider $v_j=a_j^{\frac{1}{2}}u_j$. We have
$$\begin{aligned}-\Delta v_j&=-2\nabla\left(a_j^{\frac{1}{2}}\right)\cdot\nabla u_j-a_j^{\frac{1}{2}}\Delta u_j+(-\Delta a_j^{\frac{1}{2}})u_j\\
\ &=a_j^{-\frac{1}{2}}\left[-\nabla a_j\cdot\nabla u_j-a_j\Delta u_j\right]-\left(a_j^{-\frac{1}{2}}\Delta a_j^{\frac{1}{2}}\right)v_j\\
\ &=a_j^{-\frac{1}{2}}\left[-\textrm{div}(a_j\nabla u_j)\right]-\left(a_j^{-\frac{1}{2}}\Delta a_j^{\frac{1}{2}}\right)v_j\\
\ &=-\left(a_j^{-\frac{1}{2}}\Delta a_j^{\frac{1}{2}}\right)v_j.\end{aligned}$$
Thus, for $q_j=a_j^{-\frac{1}{2}}\Delta a_j^{\frac{1}{2}}$, $v_j$ solves
$$-\Delta v_j+q_jv_j=0,\quad \textrm{in}\ \Omega.$$
Moreover, \eqref{t1a} implies that $v_j$ solves
$$\left\{
\begin{array}{ll}
-\Delta v_j+q_jv_j=0  & \mbox{in}\ \Omega ,
\\
v_j=\phi &\mbox{on}\ \partial\Omega.
\end{array}
\right.$$
Note that due to this connection the uniqueness of the solution of \eqref{eq1} with $a=a_j$ implies that $q_j\in\mathcal Q$. In addition, from \eqref{t1a}, for almost every $x\in\partial\Omega$ we get
$$\begin{aligned}\partial_\nu v_1(x)&=a_1^{\frac{1}{2}}\partial_\nu u_1(x)+\partial_\nu \left(a_1^{\frac{1}{2}}\right) u_1(x)\\
\ &=a_1^{-\frac{1}{2}}\mathcal N_{a_1}\phi(x)+\frac{\nu\cdot\nabla a_1\phi(x)}{2a_1^{\frac{1}{2}}(x)}\\
\ &=a_2^{-\frac{1}{2}}\mathcal N_{a_2}\phi(x)+\frac{\nu\cdot\nabla a_2\phi(x)}{2a_2^{\frac{1}{2}}(x)}=\partial_\nu v_2(x).\end{aligned}$$
From this identity we deduce that $\Lambda_{q_1}=\Lambda_{q_2}$ and from Theorem \ref{t2} we get
\bel{c1a} a_1^{-\frac{1}{2}}\Delta a_1^{\frac{1}{2}}=q_1=q_2=a_2^{-\frac{1}{2}}\Delta a_2^{\frac{1}{2}}.\ee
Fix $y=a_1^{\frac{1}{2}}-a_2^{\frac{1}{2}}$. Using \eqref{c1a}, we find 
$$\begin{aligned}\Delta y&=\Delta \left(a_1^{\frac{1}{2}}\right)-\Delta \left(a_2^{\frac{1}{2}}\right)=a_1^{\frac{1}{2}}\left[ a_1^{-\frac{1}{2}}\Delta \left(a_1^{\frac{1}{2}}\right)-a_1^{-\frac{1}{2}}\Delta \left(a_2^{\frac{1}{2}}\right)\right]\\
\ &=a_1^{\frac{1}{2}}\left[ a_2^{-\frac{1}{2}}\Delta \left(a_2^{\frac{1}{2}}\right)-a_1^{-\frac{1}{2}}\Delta \left(a_2^{\frac{1}{2}}\right)\right]=a_2^{-\frac{1}{2}}\Delta \left(a_2^{\frac{1}{2}}\right)[a_1^{\frac{1}{2}}-a_2^{\frac{1}{2}}]=q_2y.\end{aligned}$$
Then, \eqref{t1a} implies
$$\left\{
\begin{array}{ll}
-\Delta y+q_2y=0  & \mbox{in}\ \Omega ,
\\
y=0 &\mbox{on}\ \partial\Omega.
\end{array}
\right.$$
Using the fact that $q_2\in\mathcal Q$, we deduce that $y=0$ which implies that $a_1=a_2$.\end{proof}

In view  of Corollary \ref{c1}, it is clear that Theorem \ref{t1} follows from Theorem \ref{t2}. From now on we will focus our attention on Theorem \ref{t2}.

\subsection{Construction of complex geometric optics solutions}

In this subsection we will construct  solutions of \eqref{eq2} suitably designed for our inverse problem. These solutions, called complex geometric optics solutions, will be the main ingredient in the proof of Theorem \ref{t2}. This approach which has been initiated by \cite{SU} is the main approach considered so far for proving Theorem \ref{t2}. In this section we will use several arguments borrowed from \cite{Ch,Ha,Ka,Ki,SU}.

The complex geometric optics solutions under consideration here will depend on $\xi\in\R^n$. More precisely, for each $\xi\in\R^n$, we fix $\eta_1,\eta_2\in\mathbb S^{n-1}$ such that 
$$\xi\cdot\eta_1=\xi\cdot\eta_2=\eta_1\cdot\eta_2=0.$$
Then, for $\rho>1$ a large parameter, we consider solutions of $-\Delta v_j+q_jv_j=0$ on $\Omega$ taking the form
\bel{CGO1} v_1(x)=e^{\rho \eta_1\cdot x}\left(e^{i\rho \eta_2\cdot x}e^{-i \xi\cdot x}+w_1(x)\right),\ x\in\Omega,\ee
\bel{CGO2} v_2(x)=e^{-\rho \eta_1\cdot x}\left(e^{-i\rho \eta_2\cdot x}+w_2(x)\right),\ x\in\Omega,\ee
with $w_j\in H^2(\Omega)$, $j=1,2$, satisfying
\bel{CGO3}\norm{w_j}_{H^k(\Omega)}=\underset{\rho\to+\infty}{\mathcal O}\left(\rho^{k-1}\right),\ j=1,2,\ k=0,2.\ee
 Here we denote by $H^0(\Omega)$ the space $L^2(\Omega)$. The main point here will be the construction of the expression $w_j$ satisfying \eqref{CGO3}. For this purpose we will use the approach of  \cite{Ha} based on construction of periodic solutions.

\begin{proposition}\label{p1} For $j=1,2$ and for all $\xi\in\R^n$, there exists $\rho_1>0$ such that for all $\rho>\rho_1$  we can find a solution $v_j$ of $-\Delta v_j+q_jv_j=0$ on $\Omega$ taking the form \eqref{CGO1}-\eqref{CGO2} with $w_j\in H^2(\Omega)$ satisfying \eqref{CGO3}.\end{proposition}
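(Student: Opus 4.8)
The plan is to reduce the construction of $w_j$ to solving, for a fixed right-hand side in $L^2$, an equation of the form $(-\Delta - 2\rho\zeta\cdot\nabla + q)w = f$ where $\zeta = \eta_1 + i\eta_2$ (for $v_1$; with $-\zeta$ and the relevant shift for $v_2$). Plugging the ansatz \eqref{CGO1} into $-\Delta v_1 + q_1 v_1 = 0$ and dividing by $e^{\rho\eta_1\cdot x}$, one sees that $w_1$ must satisfy an equation of this conjugated type with $f = -q_1 e^{i\rho\eta_2\cdot x}e^{-i\xi\cdot x}$, which is bounded in $L^2(\Omega)$ uniformly in $\rho$ since $q_1 \in L^\infty(\Omega)$; similarly for $w_2$. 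The whole difficulty is therefore the existence of a solution operator for the conjugated Laplacian together with the quantitative bound \eqref{CGO3}.

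Following the approach of \cite{Ha}, I would work on a large torus: extend $q_j$ by zero (or cut off) to a box $Q = (-R,R)^n \supset \overline\Omega$ and seek periodic solutions on $Q$. On the torus one can diagonalize the constant-coefficient operator $-\Delta - 2\rho\zeta\cdot\nabla$ via Fourier series: its symbol on the dual lattice is $|k|^2 - 2i\rho\zeta\cdot k$ (up to normalization), and because $\zeta\cdot\zeta = 0$ one checks that this symbol never vanishes and in fact $|\,|k|^2 - 2i\rho\zeta\cdot k\,| \gtrsim \rho$ for all lattice points $k$ and all large $\rho$. This gives a bounded inverse $G_\rho$ of the free conjugated operator with the key gain $\|G_\rho f\|_{L^2} \lesssim \rho^{-1}\|f\|_{L^2}$, and one further reads off from the symbol that $\|G_\rho f\|_{H^2} \lesssim \rho\|f\|_{L^2}$ (and an intermediate $H^1$ bound). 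One then solves the full equation $(I + G_\rho(q_j\cdot))w_j = G_\rho f$ by Neumann series: since $\|G_\rho(q_j \cdot)\|_{L^2\to L^2} \le C\|q_j\|_{L^\infty}\rho^{-1} < \tfrac12$ for $\rho > \rho_1$, the operator $I + G_\rho(q_j\cdot)$ is invertible on $L^2(Q)$, yielding $w_j \in L^2$ with $\|w_j\|_{L^2} \lesssim \rho^{-1}\|f\|_{L^2} = \mathcal O(\rho^{-1})$. Feeding this back through the $H^2$ mapping property of $G_\rho$ gives $\|w_j\|_{H^2(Q)} \lesssim \rho\,(\|f\|_{L^2} + \|q_j w_j\|_{L^2}) = \mathcal O(\rho)$, which is exactly \eqref{CGO3} for $k = 0, 2$. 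Restricting to $\Omega$ and undoing the conjugation produces the claimed $v_j$.

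The main obstacle, and the step deserving the most care, is the uniform lower bound on the symbol $|k|^2 - 2i\rho\zeta\cdot k$ on the full dual lattice: one must split into the regime where $|k|$ is comparable to $\rho$ and where it is not, using the imaginary part $\rho\,\eta_2\cdot k$ in one range and the real part $|k|^2$ (together with $|\eta_1\cdot k|$) in the other, exploiting that $\eta_1,\eta_2$ are orthonormal. A secondary technical point is ensuring that multiplication by (the extension of) $q_j$ maps periodic $H^2$ functions appropriately and that all estimates are genuinely uniform in $\rho$; since $q_j$ is only $L^\infty$, one keeps the multiplier $q_j\cdot$ acting $L^2 \to L^2$ and never differentiates it. The extra phase factor $e^{-i\xi\cdot x}$ in $v_1$ and the mismatch of exponential weights between $v_1$ and $v_2$ play no role here — they are harmless bounded modulations of $f$ — so the construction for $v_1$ and $v_2$ is identical up to replacing $\zeta$ by $-\zeta$.
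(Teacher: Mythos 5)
Your overall architecture is the same as the paper's: conjugate away the exponential, extend everything to a box containing $\Omega$, invert the constant-coefficient conjugated operator by Fourier series with a gain of $\rho^{-1}$ in $L^2$ and a loss of $\rho$ in $H^2$, and then absorb the potential by a Neumann series (the paper runs an equivalent contraction/fixed-point argument). The gap is in the step you yourself flag as the main obstacle: the claimed uniform lower bound $\bigl|\,|k|^2-2i\rho\zeta\cdot k\,\bigr|\gtrsim\rho$ on the \emph{unshifted} dual lattice is false, and no splitting into the regimes $|k|$ comparable or not comparable to $\rho$ can rescue it. With $\zeta=\eta_1+i\eta_2$ the symbol is $|k|^2+2\rho\,\eta_2\cdot k-2i\rho\,\eta_1\cdot k$, whose real part is $|k+\rho\eta_2|^2-\rho^2$ and whose imaginary part is $-2\rho\,\eta_1\cdot k$ (you have these two interchanged, which is why your proposed case split points at the wrong term). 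Both vanish simultaneously on the $(n-2)$-sphere $\{|k+\rho\eta_2|=\rho\}\cap\eta_1^{\perp}$; concretely, if $\eta_1=e_1$, $\eta_2=e_2$ and the box is $(-\pi,\pi)^n$ so that the dual lattice is $\mathbb Z^n$, then at $k=(0,-2\rho,0,\dots,0)$ the symbol is exactly zero whenever $2\rho\in\mathbb N$, and is $O(\rho\,\mathrm{dist}(2\rho,\mathbb Z))$ in general. So for an unbounded set of $\rho$ the free conjugated operator is not even injective on periodic functions, let alone invertible with norm $O(\rho^{-1})$ for all $\rho>\rho_1$.

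The missing idea is precisely H\"ahner's device, which is the content of the paper's Lemma \ref{l1}: first rotate the box by an isometry $S$ sending $\eta_1$ to a coordinate direction, so that $\eta_1\cdot S^*\alpha\in\mathbb Z$ for all $\alpha\in\mathbb Z^n$, and then shift the Fourier basis by half a dual-lattice step in the $\eta_1$ direction, i.e.\ use $\phi_\alpha(x)=(2R)^{-n/2}e^{i\pi\eta_1\cdot x/(2R)}e^{i\pi\alpha\cdot Sx/R}$. The $\eta_1$-component of every frequency then lies in $\frac{\pi}{R}(\mathbb Z+\frac12)$, hence is bounded below in modulus by $\frac{\pi}{2R}$ (estimate \eqref{l11}), which makes the imaginary part of the symbol at least $\frac{\pi\rho}{R}$ uniformly over the whole lattice; the real part is only needed for the large frequencies $|\alpha|\gtrsim\rho$ to recover the $H^2$ bound. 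Two smaller points: the right-hand side for $w_1$ is $-(|\xi|^2+q_1)e^{i\rho\eta_2\cdot x}e^{-i\xi\cdot x}$, not $-q_1e^{i\rho\eta_2\cdot x}e^{-i\xi\cdot x}$ (you dropped the $|\xi|^2$ coming from $-\Delta e^{-i\xi\cdot x}$; harmless since $\xi$ is fixed, but it is why $\rho_1$ in the paper depends on $|\xi|$). The rest of your plan --- the source term uniformly bounded in $L^2$, the multiplier $q_j$ kept at the $L^2\to L^2$ level, the smallness condition $\rho_1\sim\|q_j\|_{L^\infty}$, and the symmetry between $v_1$ and $v_2$ --- matches the paper.
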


\begin{proof} Since the proof of this result is similar for $v_1$ and $v_2$, we will only consider the construction of $v_1$. Note first that
$$\Delta\left(e^{\rho \eta_1\cdot x}e^{i\rho \eta_2\cdot x}\right)=\rho^2\left(|\eta_1|^2-|\eta_2|^2\right)e^{\rho \eta_1\cdot x}e^{i\rho \eta_2\cdot x}=0.$$
Therefore, we have
$$\begin{aligned}&-\Delta v_1\\
&=e^{\rho \eta_1\cdot x}\left(e^{i\rho \eta_2\cdot x}|\xi|^2e^{-i \xi\cdot x}\right)-\Delta\left(e^{\rho \eta_1\cdot x}w_1\right)\\
&=e^{\rho \eta_1\cdot x}\left(e^{i\rho \eta_2\cdot x}|\xi|^2e^{-i \xi\cdot x}\right)+e^{\rho \eta_1\cdot x}(-\Delta w_1-2\rho\eta_1\cdot\nabla w_1-\rho^2w_1).\end{aligned}$$
It follows
$$-\Delta v_1+q_1v_1=0\Longleftrightarrow -\Delta w_1-2\rho\eta_1\cdot\nabla w_1-\rho^2w_1+q_1w_1=-e^{i\rho \eta_2\cdot x}(|\xi|^2+q_1)e^{-i \xi\cdot x}.$$
Using this identity, it seems clear that we need to build $w_1\in H^2(\Omega)$ satisfying 
\bel{p1a} -\Delta w_1-2\rho\eta_1\cdot\nabla w_1-\rho^2w_1+q_1w_1=-e^{i\rho \eta_2\cdot x}(|\xi|^2+q_1)e^{-i \xi\cdot x}\ee
and condition \eqref{CGO3}. For this purpose, we fix $S$ an isometric operator of $\R^n$ such that $S\eta_1=e_1=(1,0,\ldots,0)$ and $R>0$ such that $\Omega\subset Q:=S^*(-R,R)^n$.  Here $S^*$ denotes the adjoint of $S$ with respect to the Euclidean scalar product of $\R^n$. Then, for $F\in L^2(Q)$ we consider a solution $z\in H^2(Q)$ of 
\bel{p1b} -\Delta z-2\rho\eta_1\cdot\nabla z-\rho^2z=F,\quad \textrm{on }Q\ee
taking the form
\bel{p1c}z(x)=\sum_{\alpha\in\mathbb Z^n}a_\alpha\phi_\alpha ,\quad \phi_\alpha(x):=(2R)^{-\frac{n}{2}}e^{\frac{i\pi\eta_1\cdot x}{2R}}e^{\frac{i\pi\alpha\cdot S(x)}{R}}.\ee
The construction of such solutions can be deduced from the following result.
\begin{lemma}\label{l1} For all $F\in L^2(Q)$ the equation \eqref{p1b} admits a solution $z_F\in H^2(Q)$ taking the form \eqref{p1c} and satisfying 
\bel{l1a}\norm{z_F}_{H^k(Q)}\leq C\rho^{k-1}\norm{F}_{L^2(Q)},\quad k=0,2,\ee
with $C$ independent of $F$ and $\rho$.\end{lemma}
The proof of this lemma will be postponed to the end of the present demonstration. Applying Lemma \ref{l1}, we can define the operator
$$\mathcal K_\rho: L^2(Q)\ni F\mapsto z_{F}|_\Omega\in H^2(\Omega)$$
which is bounded. Here $z_F$ denotes a solution of \eqref{p1b} satisfying \eqref{l1a}. We have $\mathcal K_\rho\in \mathcal B(L^2(Q); H^2(\Omega))$ and
\bel{p1d} \norm{\mathcal K_\rho}_{\mathcal B(L^2(Q); H^k(\Omega))}\leq C\rho^{k-1},\quad k=0,2.\ee
Fix $F_\rho\in L^2(Q)$ and $\widetilde{q_1}\in L^\infty(Q)$ defined by
$$F_\rho(x):=\left\{\begin{aligned} -e^{i\rho \eta_2\cdot x}(|\xi|^2+q_1(x))e^{-i \xi\cdot x}\quad &\textrm{if }x\in\Omega\\
0\quad &\textrm{if }x\notin\Omega,\end{aligned}\right.$$
$$\widetilde{q_1}(x):=\left\{\begin{aligned} q_1(x)\quad &\textrm{if }x\in\Omega\\
0\quad &\textrm{if }x\notin\Omega.\end{aligned}\right.$$
Now consider the map
$$\begin{aligned}\mathcal G_\rho:  L^2(\Omega) & \to L^2(\Omega), 
\\ 
w &\mapsto \mathcal K_{\rho}\left[F_\rho-\widetilde{q_1}w\right].\end{aligned}$$
We will prove that there exists $\rho_1>0$ such that  this map admits a fixed point $w_1$ in the closed set $B=\{h\in L^2(\Omega):\ \norm{h}_{L^2(\Omega)}\leq 1\}$ of $L^2(\Omega)$. Indeed, \eqref{p1d} implies
$$\forall y_1,y_2\in B,\quad \norm{\mathcal G_\rho y_1-\mathcal G_\rho y_2}_{L^2(\Omega)}=\norm{\mathcal K_{\rho}\left[\widetilde{q_1}(y_2-y_1)\right]}_{L^2(\Omega)}\leq C\rho^{-1}\norm{q_1}_{L^\infty(\Omega)}\norm{y_1-y_2}_{L^2(\Omega)}$$
and
$$\begin{aligned}\forall y\in B,\quad \norm{\mathcal G_\rho y}_{L^2(\Omega)}&\leq C\rho^{-1}\left[2\norm{q_1}_{L^\infty(\Omega)}+\norm{e^{i\rho \eta_2\cdot x}|\xi|^2e^{-i \xi\cdot x}}_{L^2(\Omega)}\right]\\
&\leq C\rho^{-1}(2\norm{q_1}_{L^\infty(\Omega)}+|\Omega|^{\frac{1}{2}}|\xi|^2).\end{aligned}$$
Consequently, fixing $\rho_1=2C(2\norm{q_1}_{L^\infty(\Omega)}+2\norm{q_2}_{L^\infty(\Omega)}+|\Omega|^{\frac{1}{2}}|\xi|^2)$, we deduce that, for all $\rho>\rho_1$, $\mathcal G_\rho$ is a contraction from $B$ to $B$. Therefore, the Poincar\'e fixed point theorem implies that $\mathcal G_\rho$ admits a unique fixed point $w_1\in B$. Moreover, we have $w_1=\mathcal K_{\rho}\left[F_\rho-\widetilde{q_1}w_1\right]$ which proves that $w_1\in H^2(\Omega)$  solves
$$\begin{aligned}&[-\Delta w_1-2\rho\eta_1\cdot\nabla w_1-\rho^2w_1](x)\\ &=F_\rho(x)-\widetilde{q_1}w_1(x)\\
\ &=-e^{i\rho \eta_2\cdot x}(|\xi|^2+q_1(x))e^{-i \xi\cdot x}-q_1(x)w_1(x),\quad x\in\Omega.\end{aligned}$$
Therefore, $w_1$ solves \eqref{p1a} and \eqref{p1d} implies
$$\norm{w_1}_{H^k(\Omega)}\leq C\rho^{k-1}(2\norm{q_1}_{L^\infty(\Omega)}+|\Omega|^{\frac{1}{2}}|\xi|^2),\ k=0,2,$$
which proves \eqref{CGO3}. \end{proof}

Now that the proof of Proposition \ref{p1} is completed let us turn to Lemma \ref{l1}.

\textbf{Proof of Lemma \ref{l1}.} For all $\alpha\in\mathbb Z^n$, fix
$$\phi_\alpha(x):=(2R)^{-\frac{n}{2}}e^{\frac{i\pi\eta_1\cdot x}{2R}}e^{\frac{i\pi\alpha\cdot Sx}{R}},\quad x\in Q.$$
Combining  the fact that 
$$\phi_\alpha(S^*x)=\psi_\alpha(x)=(2R)^{-\frac{n}{2}}e^{\frac{i\pi\eta_1\cdot S^*x}{2R}}e^{\frac{i\pi\alpha\cdot x}{R}},\quad x\in (-R,R)^n,$$
with the fact that $S$ is an isometry and  $(\psi_\alpha)_{\alpha\in\mathbb Z^d}$ is an orthonormal basis of $L^2((-R,R)^n)$, we deduce that $(\phi_\alpha)_{\alpha\in\mathbb Z^d}$ is an orthonormal basis of $L^2(Q)$. Thus, for all $z$ of the form \eqref{p1c} and $k=0,2$, one can check that
\bel{p1e}\sum_{\alpha\in\mathbb Z^n}(1+|\alpha|^2)^{k}|a_\alpha|^2<+\infty\Longrightarrow z\in H^{k}(Q),\ee
\bel{p1f}z\in H^{k}(Q)\Longrightarrow\norm{z}_{H^k(Q)}^2\leq C\sum_{\alpha\in\mathbb Z^n}(1+|\alpha|^2)^{k}|a_\alpha|^2,\ee
with $C>0$ independent of $z\in H^{k}(Q)$ in the last estimate. Note that  since $S^*e_1=S^{-1}e_1=\eta_1$, one can check that $$
\forall \alpha\in\mathbb Z^n,\quad \eta_1\cdot S^*\alpha\in\mathbb Z$$
and we deduce that
\bel{l11}\forall \alpha\in\mathbb Z^n,\quad \abs{\eta_1\cdot S^*\alpha+\frac{1}{2}}\geq \frac{1}{2}.\ee
Using \eqref{l11}, for $\alpha\in\mathbb Z^n$, we fix
$$a_\alpha:=\frac{\left\langle F,\phi_\alpha\right\rangle_{L^2(Q)}}{\frac{\pi^2}{R^2}\abs{S^*\alpha+\frac{\eta_1}{2}}^2-\rho^2-\frac{2i\pi\rho}{R}\left(S^*\alpha\cdot\eta_1+\frac{1}{2}\right) }.$$
Using the fact that $|S^*\alpha|=|\alpha|$ and \eqref{l11}, we get 
\bel{l12}\begin{aligned}&\abs{\frac{\pi^2}{R^2}\abs{S^*\alpha+\frac{\eta_1}{2}}^2-\rho^2-\frac{2i\pi\rho}{R}\left(S^*\alpha\cdot\eta_1+\frac{1}{2}\right)}\\
&\geq \max\left(\abs{\frac{\pi^2}{R^2}\abs{S^*\alpha+\frac{\eta_1}{2}}^2-\rho^2},\frac{2\pi\rho}{R}\abs{S^*\alpha\cdot\eta_1+\frac{1}{2}}\right)\\
&\geq \max\left(\abs{\frac{\pi^2}{2R^2}\abs{\alpha}^2 -\frac{\pi^2}{4R^2}-\rho^2} , \frac{\pi\rho}{R}\right).\end{aligned}\ee
It follows that
$$\begin{aligned}&\sum_{\alpha\in\mathbb Z^n}(1+|\alpha|^2)^{2}|a_\alpha|^2\\
&\leq\sum_{|\alpha|^2\geq 2+\frac{4R^2\rho^2}{\pi^2}}(1+|\alpha|^2)^{2}|a_\alpha|^2+\sum_{|\alpha|^2\leq 2+\frac{4R^2\rho^2}{\pi^2}}(1+|\alpha|^2)^{2}|a_\alpha|^2\\
&\leq\sum_{|\alpha|^2\geq 2+\frac{4R^2\rho^2}{\pi^2}}\frac{(1+|\alpha|^2)^{2}\abs{\left\langle F,\phi_\alpha\right\rangle_{L^2(Q)}}^2}{\abs{\frac{\pi^2}{2R^2}\abs{\alpha}^2}^2}+\sum_{|\alpha|^2\leq 2+\frac{4R^2\rho^2}{\pi^2}}\frac{(1+|\alpha|^2)^{2}\abs{\left\langle F,\phi_\alpha\right\rangle_{L^2(Q)}}^2}{\frac{\pi^2\rho^2}{R^2}}\\
&\leq\sum_{|\alpha|^2\geq 2+\frac{4R^2\rho^2}{\pi^2}}\frac{16R^4}{\pi^4}\abs{\left\langle F,\phi_\alpha\right\rangle_{L^2(Q)}}^2+\frac{\left(3+\frac{4R^2\rho^2}{\pi^2}\right)^{2}}{\frac{\pi^2\rho^2}{R^2}} \sum_{|\alpha|^2\leq 2+\frac{4R^2\rho^2}{\pi^2}}\abs{\left\langle F,\phi_\alpha\right\rangle_{L^2(Q)}}^2\\
&\leq C\rho^2\sum_{\alpha\in\mathbb Z^n}\abs{\left\langle F,\phi_\alpha\right\rangle_{L^2(Q)}}^2=C\rho^2\norm{F}_{L^2(Q)}^2.\end{aligned}$$
Here $C>0$  depends only on $R$. Combining this estimate with \eqref{p1e}-\eqref{p1f} we deduce that $z\in H^{2}(Q)$ fulfills \eqref{l1a} for $k=2$. In particular, for all $\beta=(\beta_1,\ldots,\beta_n)\in\mathbb N^n$, $|\beta|\leq 2$, one can check that
$$\partial_x^\beta z=\sum_{\alpha\in\mathbb Z^n}\left(\frac{i\pi}{R}\right)^{|\beta|}\left[\prod_{j=1}^n\left(\alpha\cdot Se_j+\frac{\eta_1\cdot e_j}{2}\right)^{\beta_j}\right]a_\alpha\phi_\alpha,$$
where 
$$e_j=(0,\ldots,0,\underbrace{1}_{\textrm{position\ } j},0,\ldots0).$$
It follows 
$$\begin{aligned}-\Delta z&=\sum_{\alpha\in\mathbb Z^n} \frac{\pi^2}{R^2}\left[\sum_{j=1}^n\left(\alpha\cdot Se_j+\frac{\eta_1\cdot e_j}{2}\right)^{2}\right]a_\alpha\phi_\alpha=\sum_{\alpha\in\mathbb Z^n} \frac{\pi^2}{R^2}\left[\sum_{j=1}^n\left(S^*\alpha\cdot e_j+\frac{\eta_1\cdot e_j}{2}\right)^{2}\right]a_\alpha\phi_\alpha\\
\ &=\sum_{\alpha\in\mathbb Z^n} \frac{\pi^2}{R^2}\abs{S^*\alpha +\frac{\eta_1}{2}}^{2}a_\alpha\phi_\alpha.\end{aligned}$$
In the same way, we find 
$$-\eta_1\cdot\nabla z=\sum_{\alpha\in\mathbb Z^n}\frac{-i\pi}{R}\left[S^*\alpha\cdot \eta_1+\frac{1}{2}\right]a_\alpha\phi_\alpha.$$
From these two identities we deduce that
$$\begin{aligned}-\Delta z-2\rho\eta_1\cdot\nabla z-\rho^2z&=\sum_{\alpha\in\mathbb Z^n}\left[\frac{\pi^2}{R^2}\abs{S^*\alpha +\frac{\eta_1}{2}}^{2}-\frac{2i\pi\rho}{R}\left(S^*\alpha\cdot \eta_1+\frac{1}{2}\right)-\rho^2\right]a_\alpha\phi_\alpha
\\
\ &=\sum_{\alpha\in\mathbb Z^n}\left\langle F,\phi_\alpha\right\rangle_{L^2(Q)}\phi_\alpha=F.\end{aligned}$$
Therefore $z$ is a solution of \eqref{p1b}. 
Using \eqref{l12}, we obtain
$$\abs{a_\alpha}\leq \frac{R\abs{\left\langle F,\phi_\alpha\right\rangle_{L^2(Q)}}}{\pi\rho}$$
and, applying \eqref{p1f}, we get
$$\norm{z}_{L^2(Q)}\leq C\norm{F}_{L^2(Q)}\rho^{-1},$$
with $C$ independent of $F$ and $\rho$. This proves \eqref{l1a} for $k=0$ and it completes the proof of the lemma.\qed

\subsection{Proof of Theorem \ref{t2}}

In this section we will complete the proof of Theorem \ref{t2}. For this purpose, let us assume that $\Lambda_{q_1}=\Lambda_{q_2}$. We fix $\xi\in\R^n$ and applying Proposition \ref{p1} we deduce the existence of $v_j\in H^2(\Omega)$, $j=1,2$, solving $-\Delta v_j+q_jv_j=0$  of the form \eqref{CGO1}-\eqref{CGO2} with $w_j\in H^2(\Omega)$ satisfying \eqref{CGO3}. Consider $y_2\in H^2(\Omega)$ solving 
$$\left\{
\begin{array}{ll}
-\Delta y_2+q_2y_2=0  & \mbox{in}\ \Omega ,
\\
y_2=v_1 &\mbox{on}\ \partial\Omega.
\end{array}
\right.$$
Then, $v=y_2-v_1$ solves
$$\left\{
\begin{array}{ll}
-\Delta v+q_2v=(q_1-q_2)v_1  & \mbox{in}\ \Omega ,
\\
v=0 &\mbox{on}\ \partial\Omega.
\end{array}
\right.$$
Moreover, fixing $\phi\in H^{\frac{3}{2}}(\partial\Omega)$ defined by 
$$\phi(x)=v_1(x),\quad x\in\partial\Omega,$$
we find
$$\partial_\nu v=\partial_\nu y_2-\partial_\nu v_1=\Lambda_{q_2}\phi-\Lambda_{q_1}\phi=0.$$
Thus, $v$ satisfies the condition
$$\left\{
\begin{array}{ll}
-\Delta v+q_2v=(q_1-q_2)v_1  & \mbox{in}\ \Omega ,
\\
v=\partial_\nu v=0 &\mbox{on}\ \partial\Omega.
\end{array}
\right.$$
Multiplying, this equation by $v_2$ and integrating by parts, we get
$$\int_\Omega (q_1-q_2)v_1v_2dx =\int_\Omega (-\Delta v+q_2v)v_2dx=\int_\Omega v(-\Delta v_2+q_2v_2)dx=0.$$
We obtain the orthogonality identity
\bel{t1d}\int_\Omega (q_1-q_2)v_1v_2dx =0.\ee
In addition,  \eqref{CGO1}-\eqref{CGO2} imply
$$\int_\Omega (q_1-q_2)v_1v_2dx =\int_\Omega (q_1-q_2)e^{-ix\cdot\xi}dx +\int_\Omega Z_\rho dx,$$
with $$Z_\rho(x)=(q_1(x)-q_2(x))\left[e^{-i\rho\eta_2\cdot x}w_1(x)+e^{i\rho\eta_2\cdot x}e^{-i\xi\cdot x}w_2(x)+w_1(x)w_2(x)\right],\ x\in\Omega.$$
Fixing $q\in L^\infty(\R^n)\cap L^1(\R^n)$ defined by
$$q(x):=\left\{\begin{aligned} q_1(x)-q_2(x),\quad &\textrm{if }x\in\Omega,\\
0,\quad &\textrm{if }x\in\R^n\setminus\Omega\end{aligned}\right.$$
and applying \eqref{CGO3} we get
$$\lim_{\rho\to+\infty}\int_\Omega (q_1-q_2)v_1v_2dx=\int_\Omega qe^{-ix\cdot\xi}dx=\int_{\R^n} qe^{-ix\cdot\xi}dx.$$
Combining this with \eqref{t1d} we obtain
$$\int_{\R^n} qe^{-ix\cdot\xi}dx=0.$$
Since $\xi\in\R^n$ is arbitrary chosen from the injectivity of the Fourier transform we deduce that this condition implies $q=0$ and $q_1=q_2$. This completes the proof of Theorem \ref{t2}.

\section{Partial data results}

Since the pioneer work of \cite{SU}, several authors considered the Calder\'on problem with measurements restricted to some portion of the boundary instead of the full boundary $\partial\Omega$ as stated in Theorem \ref{t1}, \ref{t2}. These class of inverse problems are called inverse problem with partial data. The goal of these inverse problems is to reduce as much as possible the portion of the boundary where the excitation are imposed to the system (corresponding to the support of the Dirichlet input) and the portion where the measurements are made (corresponding to restriction on the knowledge of the Neumann boundary values of solutions). In this section, we consider two class of partial data results associated with the Calder\'on problem. The first class of partial data results corresponds to  partial data results under the assumption of Theorem \ref{t1}, \ref{t2}. The second class of inverse problems requires additional assumption. Namely, they need the knowledge of the coefficients on the neighborhood of the boundary. In this section we give an example of each of these two types of partial data results.

\subsection{Partial data result without restriction } 
In this subsection we prove that the result of Theorem \ref{t1} and \ref{t2} is still true if one restrict the Neumann boundary measurement to, roughly speaking, half of the boundary. This approach which goes back to \cite{BU}, combines the complex geometric optics solutions of Section 3  with Carleman estimates with linear weight. 

Let us first state this result. We start by fixing $\eta\in\mathbb S^{n-1}$ and we consider the decomposition of $\partial\Omega$ into the $\eta$-illuminated face $\partial\Omega_{-,\eta}:=\{x\in\partial\Omega:\ \nu(x)\cdot\eta\leq 0\}$ and the $\eta$-shadowed face $\partial\Omega_{+,\eta}:=\{x\in\partial\Omega:\ \nu(x)\cdot\eta\geq 0\}$. Then, for $\epsilon>0$, we introduce $V:=\{x\in\partial\Omega:\ \nu(x)\cdot\eta<2\epsilon\}$ and we consider the partial DN map
$$\Lambda_q^*:H^{\frac{3}{2}}(\partial\Omega)\ni\phi\mapsto \partial_\nu v_{q,\phi}|_V\in H^{\frac{1}{2}}(V),$$
with $v_{q,\phi}$ solving \eqref{eq2}. The knowledge of this map corresponds to the knowledge of DN map $\Lambda_q$ with restriction of the measurements to $V$ which is a neighborhood of the $\eta$-illuminated face of $\partial\Omega$. The main result of this section can be stated as follows.
\begin{theorem}
\label{t3}
For $j=1,2$, let $q_j \in \mathcal Q$.
Then we have
\bel{t3a} \Lambda_{q_1}^*=\Lambda_{q_2}^*\Longrightarrow q_1=q_2.\ee

 \end{theorem}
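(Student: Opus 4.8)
The plan is to adapt the proof of Theorem \ref{t2} by replacing the plane-wave CGO solutions with CGO solutions carrying a \emph{linear Carleman weight}, so that the Neumann trace on the $\eta$-shadowed part of $\partial\Omega$ is controlled and the orthogonality identity survives even though we only know the DN map on $V$. First I would fix $\xi\in\R^n$ lying in the hyperplane orthogonal to $\eta$, pick $\eta_1\in\mathbb S^{n-1}$ orthogonal to both $\xi$ and $\eta$, and build, following Proposition \ref{p1}, CGO solutions of the form $v_1=e^{\rho\eta_1\cdot x}\bigl(e^{i\rho\eta\cdot x}e^{-i\xi\cdot x}+w_1\bigr)$ and $v_2=e^{-\rho\eta_1\cdot x}\bigl(e^{-i\rho\eta\cdot x}+w_2\bigr)$ with $w_j$ satisfying \eqref{CGO3}; here the role of $\eta_2$ in Section 3 is played by $\eta$, which is permissible since only $\xi\cdot\eta=\xi\cdot\eta_1=\eta\cdot\eta_1=0$ was used. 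The linear weight is $\varphi(x)=\eta_1\cdot x$, and the essential analytic input will be a Carleman estimate of the form
$$\|e^{\rho\varphi}u\|_{L^2(\Omega)}^2+\rho\int_{\partial\Omega_{+,\eta_1}}|\nu\cdot\nabla\varphi|\,|e^{\rho\varphi}\partial_\nu u|^2\,d\sigma\leq C\Bigl(\rho^{-1}\|e^{\rho\varphi}(-\Delta+q_2)u\|_{L^2(\Omega)}^2+\rho\int_{\partial\Omega_{-,\eta_1}}|\nu\cdot\nabla\varphi|\,|e^{\rho\varphi}\partial_\nu u|^2\,d\sigma\Bigr)$$
valid for $u\in H^2(\Omega)\cap H^1_0(\Omega)$ and $\rho$ large, which is the standard boundary Carleman estimate for the Laplacian with linear weight; I would cite \cite{BU} for it rather than reprove it.

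Next I would run the integration-by-parts argument of the proof of Theorem \ref{t2}. Let $y_2\in H^2(\Omega)$ solve $-\Delta y_2+q_2y_2=0$ with $y_2=v_1$ on $\partial\Omega$, set $v=y_2-v_1$, so that $-\Delta v+q_2v=(q_1-q_2)v_1$ in $\Omega$ and $v=0$ on $\partial\Omega$, but now $\partial_\nu v$ vanishes only on $V$, not on all of $\partial\Omega$. Multiplying by $v_2$ and integrating by parts produces
$$\int_\Omega(q_1-q_2)v_1v_2\,dx=\int_{\partial\Omega}\partial_\nu v\,\overline{?}\;\cdots$$
— more precisely a boundary term $\int_{\partial\Omega\setminus V}\partial_\nu v\, v_2\,d\sigma$, since the contribution from $V$ is killed by $\Lambda_{q_1}^*=\Lambda_{q_2}^*$. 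I would then estimate this leftover boundary integral: on $\partial\Omega\setminus V$ we have $\nu\cdot\eta\geq 2\epsilon>0$, so $\partial\Omega\setminus V$ sits strictly inside the $\eta$-shadowed face; by an appropriate choice of $\eta_1$ close to $\eta$ (or a deformation argument as in \cite{BU}) one arranges $\nu\cdot\eta_1>\epsilon$ there as well, hence $e^{\rho\varphi}=e^{\rho\eta_1\cdot x}$ is, relative to the weight of $v_2\sim e^{-\rho\eta_1\cdot x}$, in the \emph{good} (decaying) regime; applying Cauchy–Schwarz and the boundary Carleman estimate to $v$ (whose Neumann trace on the illuminated face is controlled by $\rho^{-1/2}\|e^{\rho\varphi}(q_1-q_2)v_1\|_{L^2}=O(1)$ since $e^{\rho\varphi}v_1$ is bounded) shows this term is $o(1)$ as $\rho\to\infty$.

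Finally, passing to the limit $\rho\to+\infty$ exactly as in the full-data proof — using $v_1v_2=e^{-i\xi\cdot x}+Z_\rho$ with $\int_\Omega Z_\rho\to0$ by \eqref{CGO3} — yields $\int_{\R^n}q\,e^{-i\xi\cdot x}\,dx=0$ for $q=(q_1-q_2)\chi_\Omega$ and every $\xi$ orthogonal to $\eta$. This is weaker than before: it only gives that the Fourier transform $\widehat q$ vanishes on the hyperplane $\eta^\perp$. To conclude $q\equiv0$ I would exploit that $q$ is compactly supported, so $\widehat q$ is real-analytic (indeed entire), and that the construction works for a whole open cone of directions $\eta$ near the original one (by perturbing $\eta$, which only perturbs $V$ within the class of admissible neighborhoods); the union of the corresponding hyperplanes $\eta^\perp$ contains an open subset of $\R^n$, and a nonzero entire function cannot vanish on an open set, forcing $\widehat q\equiv0$ and hence $q_1=q_2$. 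The main obstacle is the second step: making the geometry precise — choosing $\eta_1$ so that both the CGO construction goes through and $\partial\Omega\setminus V$ lies in the region where the Carleman boundary term has the favorable sign — and verifying that the leftover boundary integral genuinely tends to zero; this is where the Carleman estimate with linear weight does the real work, and where I would lean most heavily on \cite{BU}.
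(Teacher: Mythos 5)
Your proposal is correct and follows essentially the same route as the paper: CGO solutions with linear phase $\rho\eta_1\cdot x$ for $\eta_1$ ranging over a small spherical cap around the fixed direction $\eta$ (with $V$ unchanged), the boundary Carleman estimate with linear weight (which the paper proves from scratch as Proposition \ref{p2} together with Lemma \ref{l3}, rather than citing \cite{BU}) to show the leftover boundary term on $\partial\Omega\setminus V\subset\{x\in\partial\Omega:\ \nu(x)\cdot\eta_1\geq\epsilon\}$ is $O(\rho^{-1/2})$, and then the vanishing of the Fourier transform of $q_1-q_2$ on the hyperplanes $\eta_1^{\bot}$. The only divergence is the final step, where you use real-analyticity of the Fourier transform plus vanishing on an open set, while the paper proves Lemma \ref{l4} (vanishing on countably many distinct hyperplanes suffices); both arguments are valid here.
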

Let us also consider the partial DN map associated with \eqref{eq1} given by 
$$\mathcal N_a^*:H^{\frac{3}{2}}(\partial\Omega)\ni\phi\mapsto a\partial_\nu u_{a,\phi}|_V\in H^{\frac{1}{2}}(V).$$
Then in a similar way to Corollary \ref{c1}, we can deduce from Theorem \ref{t3} the following result.
\begin{corollary}\label{c2} Assume that the conditions of Theorem \ref{t1} are fulfilled and Theorem \ref{t3} holds true. Then we have 
$$\mathcal N_{a_1}^*=\mathcal N_{a_2}^*\Longrightarrow a_1=a_2.$$
 \end{corollary}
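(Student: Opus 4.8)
The plan is to mimic the proof of Corollary \ref{c1} almost verbatim, carrying the partial-data restriction through each step. Assume $\mathcal N_{a_1}^*=\mathcal N_{a_2}^*$. For $\phi\in H^{\frac{3}{2}}(\partial\Omega)$ and $j=1,2$, let $u_j\in H^2(\Omega)$ solve $-\textrm{div}(a_j\nabla u_j)=0$ in $\Omega$ with $u_j=a_1^{-\frac12}\phi$ on $\partial\Omega$, and set $v_j=a_j^{\frac12}u_j$. Exactly as in Corollary \ref{c1}, the Liouville transform gives that $v_j$ solves $-\Delta v_j+q_jv_j=0$ in $\Omega$ with $q_j=a_j^{-\frac12}\Delta a_j^{\frac12}$, that $q_j\in\mathcal Q$, and (using \eqref{t1a}) that $v_j=\phi$ on $\partial\Omega$.

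The only new point is the boundary-derivative computation, which must now be read as an equality of functions on $V$ rather than on all of $\partial\Omega$. For almost every $x\in V\subset\partial\Omega$ we have, just as before,
$$\partial_\nu v_j(x)=a_j^{\frac12}\partial_\nu u_j(x)+\partial_\nu\left(a_j^{\frac12}\right)u_j(x)=a_j^{-\frac12}\mathcal N_{a_j}^*\phi(x)+\frac{\nu\cdot\nabla a_j\,\phi(x)}{2a_j^{\frac12}(x)}.$$
By \eqref{t1a} the coefficients $a_j^{\frac12}$ and $\nabla a_j$ agree on $\partial\Omega$, and by hypothesis $\mathcal N_{a_1}^*\phi=\mathcal N_{a_2}^*\phi$ on $V$, so $\partial_\nu v_1=\partial_\nu v_2$ on $V$. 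Since $\phi\in H^{\frac{3}{2}}(\partial\Omega)$ was arbitrary, this is precisely the statement $\Lambda_{q_1}^*=\Lambda_{q_2}^*$.

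Applying Theorem \ref{t3} we obtain $q_1=q_2$, i.e. $a_1^{-\frac12}\Delta a_1^{\frac12}=a_2^{-\frac12}\Delta a_2^{\frac12}$ in $\Omega$. From here the argument is identical to the end of Corollary \ref{c1}: setting $y=a_1^{\frac12}-a_2^{\frac12}$, the same manipulation shows $\Delta y=q_2 y$ in $\Omega$, while \eqref{t1a} gives $y=0$ on $\partial\Omega$; since $q_2\in\mathcal Q$ the implication \eqref{q} forces $y=0$, hence $a_1=a_2$. I expect no real obstacle here: the only thing to be careful about is that restricting to $V$ is harmless because the boundary conditions in \eqref{t1a} are assumed on the whole of $\partial\Omega$, so the extra terms in $\partial_\nu v_j$ coming from $\nabla a_j$ cancel pointwise on $V$ independently of the data restriction.
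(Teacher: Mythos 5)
Your proof is correct and is exactly the argument the paper intends: it only asserts that Corollary \ref{c2} follows ``in a similar way to Corollary \ref{c1}'', and you have carried out that Liouville-transform reduction, correctly observing that the extra boundary terms cancel on $V$ because \eqref{t1a} holds on all of $\partial\Omega$, so that $\mathcal N_{a_1}^*=\mathcal N_{a_2}^*$ yields $\Lambda_{q_1}^*=\Lambda_{q_2}^*$ and Theorem \ref{t3} applies. No issues.
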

From now on we will focus our attention on Theorem \ref{t3}. We start with a Carleman estimate which will be the key ingredient for our problem.

\begin{proposition}\label{p2} Let $q\in L^\infty(\Omega)$ and $v\in H^1_0(\Omega)\cap H^2(\Omega)$. Then there exists $C>0$ and $\rho_2>\rho_1$\footnote{ Here $\rho_1$ denotes the constant of Proposition \ref{p1}.} depending only on $\Omega$, $\eta_1$ and $\norm{q}_{L^\infty(\Omega)}$ such that for all $\rho>\rho_2$ we have
\bel{p2a}\begin{aligned}&\rho^2\int_\Omega e^{-2\rho x\cdot\eta_1}|v(x)|^2dx+\rho \int_{\partial\Omega_{+,\eta_1}}e^{-2\rho x\cdot\eta_1}|\partial_\nu v(x)|^2|\eta_1\cdot\nu|d\sigma(x)\\
&\leq C\left[\int_\Omega e^{-2\rho x\cdot\eta_1}|(-\Delta+q)v(x)|^2dx+\rho \int_{\partial\Omega_{-,\eta_1}}e^{-2\rho x\cdot\eta_1}|\partial_\nu v(x)|^2|\eta_1\cdot\nu|d\sigma(x)\right].\end{aligned}\ee
\end{proposition}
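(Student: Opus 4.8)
The plan is to reduce \eqref{p2a} to an equivalent \emph{conjugated} estimate and then prove that estimate by a Rellich-type integration-by-parts identity, completed by a one-dimensional Poincar\'e inequality in the direction $\eta_1$; the potential $q$ is dealt with at the end by an absorption argument. First I would substitute $w:=e^{-\rho x\cdot\eta_1}v$. Since $e^{\pm\rho x\cdot\eta_1}\in C^\infty(\overline\Omega)$ and $v\in H^1_0(\Omega)\cap H^2(\Omega)$, one has $w\in H^1_0(\Omega)\cap H^2(\Omega)$, and on $\partial\Omega$, where $v=0$, one checks that $\partial_\nu w=e^{-\rho x\cdot\eta_1}\partial_\nu v$. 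Using $\abs{\eta_1}=1$, a direct computation gives the conjugated operator
\[P_\rho w:=e^{-\rho x\cdot\eta_1}(-\Delta)\big(e^{\rho x\cdot\eta_1}w\big)=-\Delta w-2\rho\,\eta_1\cdot\nabla w-\rho^2 w,\]
and likewise $e^{-\rho x\cdot\eta_1}(-\Delta+q)v=P_\rho w+qw$. Hence \eqref{p2a} is equivalent to
\[\rho^2\norm{w}_{L^2(\Omega)}^2+\rho\int_{\partial\Omega_{+,\eta_1}}\abs{\partial_\nu w}^2\abs{\eta_1\cdot\nu}\,d\sigma\leq C\left(\norm{P_\rho w+qw}_{L^2(\Omega)}^2+\rho\int_{\partial\Omega_{-,\eta_1}}\abs{\partial_\nu w}^2\abs{\eta_1\cdot\nu}\,d\sigma\right)\]
for all $w\in H^1_0(\Omega)\cap H^2(\Omega)$ and $\rho>\rho_2$, which is what I would prove.

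For the core of the argument I would split $P_\rho=P_++P_-$ with $P_+:=-\Delta-\rho^2$, which is formally self-adjoint, and $P_-:=-2\rho\,\eta_1\cdot\nabla$, which is formally skew-adjoint on functions vanishing on $\partial\Omega$. Expanding
\[\norm{P_\rho w}_{L^2(\Omega)}^2=\norm{P_+w}_{L^2(\Omega)}^2+\norm{P_-w}_{L^2(\Omega)}^2+2\,\mathrm{Re}\,(P_+w,P_-w)_{L^2(\Omega)}\]
and integrating by parts in the cross term --- using $w|_{\partial\Omega}=0$, hence $\nabla w=(\partial_\nu w)\,\nu$ and $\abs{\nabla w}^2=\abs{\partial_\nu w}^2$ on $\partial\Omega$ --- all interior contributions should cancel and only a boundary term should survive, yielding the identity
\[\norm{P_\rho w}_{L^2(\Omega)}^2=\norm{P_+w}_{L^2(\Omega)}^2+\norm{P_-w}_{L^2(\Omega)}^2+2\rho\int_{\partial\Omega}\abs{\partial_\nu w}^2(\eta_1\cdot\nu)\,d\sigma.\]
I would carry this out first for $w\in C^\infty(\overline\Omega)$ with $w|_{\partial\Omega}=0$ and then pass to $w\in H^1_0(\Omega)\cap H^2(\Omega)$ by density. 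Splitting $\partial\Omega$ into $\partial\Omega_{+,\eta_1}$ and $\partial\Omega_{-,\eta_1}$ according to the sign of $\eta_1\cdot\nu$ and moving the $\partial\Omega_{-,\eta_1}$ contribution to the right-hand side leaves the $\partial\Omega_{+,\eta_1}$ boundary term on the left with a favourable sign.

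It remains to produce the term $\rho^2\norm{w}_{L^2(\Omega)}^2$. Here I would use that $\norm{P_-w}_{L^2(\Omega)}^2=4\rho^2\norm{\eta_1\cdot\nabla w}_{L^2(\Omega)}^2$ and that $w$ vanishes on $\partial\Omega$: applying a one-dimensional Poincar\'e inequality along each segment of $\Omega$ parallel to $\eta_1$ (of length at most $\mathrm{diam}\,\Omega$) and integrating by Fubini gives $\norm{w}_{L^2(\Omega)}^2\leq(\mathrm{diam}\,\Omega)^2\norm{\eta_1\cdot\nabla w}_{L^2(\Omega)}^2$, so that $\rho^2\norm{w}_{L^2(\Omega)}^2\leq\tfrac14(\mathrm{diam}\,\Omega)^2\norm{P_-w}_{L^2(\Omega)}^2$. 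Discarding $\norm{P_+w}_{L^2(\Omega)}^2\geq0$ in the identity above then proves the estimate when $q=0$. For general $q$ I would bound $\norm{P_\rho w}_{L^2(\Omega)}^2\leq2\norm{P_\rho w+qw}_{L^2(\Omega)}^2+2\norm{q}_{L^\infty(\Omega)}^2\norm{w}_{L^2(\Omega)}^2$ and fix $\rho_2>\rho_1$ large enough, depending only on $\Omega$ and $\norm{q}_{L^\infty(\Omega)}$, so that for $\rho>\rho_2$ the term $2\norm{q}_{L^\infty(\Omega)}^2\norm{w}_{L^2(\Omega)}^2$ is at most half the $\rho^2$-term and is therefore absorbed into the left-hand side. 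Undoing the substitution $w=e^{-\rho x\cdot\eta_1}v$ and using $\partial_\nu w=e^{-\rho x\cdot\eta_1}\partial_\nu v$ on $\partial\Omega$ recovers \eqref{p2a}, with $C$ depending only on $\mathrm{diam}\,\Omega$.

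The step I expect to be the main obstacle is the integration by parts in the cross term $\mathrm{Re}\,(P_+w,P_-w)_{L^2(\Omega)}$: one must check that all the interior terms genuinely cancel, identify correctly the surviving boundary integral (which is exactly where $w|_{\partial\Omega}=0$ enters, turning $\abs{\nabla w}^2$ into $\abs{\partial_\nu w}^2$ on $\partial\Omega$), and justify the computation for $w$ only in $H^1_0(\Omega)\cap H^2(\Omega)$ via the approximation argument. Once this identity is established, the remaining steps are elementary.
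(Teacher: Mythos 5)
Your proposal is correct and follows essentially the same route as the paper: conjugate by $e^{-\rho x\cdot\eta_1}$, split the conjugated operator into its symmetric part $-\Delta-\rho^2$ and antisymmetric part $-2\rho\,\eta_1\cdot\nabla$, integrate the cross term by parts (where $w|_{\partial\Omega}=0$ turns $\nabla w$ into $(\partial_\nu w)\nu$ and leaves only the boundary term $2\rho\int_{\partial\Omega}|\partial_\nu w|^2(\eta_1\cdot\nu)\,d\sigma$), recover $\rho^2\norm{w}_{L^2}^2$ from $\norm{P_-w}_{L^2}^2$ via the directional Poincar\'e inequality, and absorb $q$ for $\rho$ large. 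The only cosmetic difference is that you record the cross term as an exact identity while the paper only keeps the inequality obtained after discarding $\norm{P_+w}_{L^2}^2$; the content is the same.
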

\begin{proof} We start by proving \eqref{p2a} for $q=0$. Without lost of generality we may assume that $v$ is real valued. We fix $w(x)=e^{-\rho x\cdot\eta_1}v(x)$ and $P=e^{-\rho x\cdot\eta_1}\Delta e^{\rho x\cdot\eta_1}=\Delta +2\rho \eta_1\cdot\nabla +\rho^2$. Using the fact that $v\in H^1_0(\Omega)$ and the fact that 
$e^{-2\rho x\cdot\eta_1}|\Delta v(x)|^2=|Pw(x)|^2$, we deduce that \eqref{p2a}, with $q=0$, can be deduced from
\bel{p2b}\begin{aligned}&\rho^2\int_\Omega |w(x)|^2dx+\rho \int_{\partial\Omega_{+,\eta_1}}|\partial_\nu w(x)|^2|\eta_1\cdot\nu|d\sigma(x)\\
&\leq C\left[\int_\Omega |Pw(x)|^2dx+\rho \int_{\partial\Omega_{-,\eta_1}}|\partial_\nu w(x)|^2|\eta_1\cdot\nu|d\sigma(x)\right],\end{aligned}\ee
with $C>0$ depending only on $\Omega$. We start with  this last estimate. For this purpose, we decompose the differential operator $P$ into $P_++P_-$
with
$$P_+=\Delta +\rho^2,\quad P_-=2\rho\eta_1\cdot\nabla.$$
We have
\bel{p2c}\begin{aligned}\norm{Pw}_{L^2(\Omega)}^2&\geq \norm{P_-w}_{L^2(\Omega)}^2+2\int_{\Omega}P_+wP_-wdx\\
\ &\geq \norm{P_-w}_{L^2(\Omega)}^2+2I_1+2I_2,\end{aligned}\ee
with 
$$I_1=2\rho \int_{\Omega}\Delta w(\eta_1\cdot\nabla w)dx,\quad I_2=2\rho^3\int_\Omega w(\eta_1\cdot\nabla w)dx.$$
Using the fact that $w_{|\partial\Omega}=0$ we deduce that $\nabla w=(\partial_\nu w)\nu$ on $\partial\Omega$ 
and integrating by parts, we obtain
\bel{p2d}\begin{aligned}I_1&=2\rho\int_{\partial\Omega} \partial_\nu w(\eta_1\cdot\nabla w)d\sigma(x) -2\rho \int_{\Omega}\nabla w\cdot\nabla(\eta_1\cdot\nabla w)dx\\
\ &=2\rho\int_{\partial\Omega} |\partial_\nu w|^2(\eta_1\cdot\nu)d\sigma(x)-\rho \int_{\Omega}\eta_1\cdot\nabla|\nabla w|^2dx\\
\ &=2\rho\int_{\partial\Omega} |\partial_\nu w|^2(\eta_1\cdot\nu)d\sigma(x)-\rho \int_{\Omega}\textrm{div}\left(|\nabla w|^2\eta_1\right)dx\\
\ &=\rho\int_{\partial\Omega} |\partial_\nu w|^2(\eta_1\cdot\nu)d\sigma(x).\end{aligned}\ee
In the same way, we obtain
$$I_2=\rho^3\int_\Omega\textrm{div}\left(w^2\eta_1\right)dx=\rho^3\int_{\partial\Omega} w^2(\eta_1\cdot\nu)d\sigma(x)=0.$$
Combining this with \eqref{p2c}-\eqref{p2d}, we obtain
\bel{p2e}\begin{aligned}&\rho^2\int_\Omega |\eta_1\cdot\nabla w(x)|^2dx+2\rho \int_{\partial\Omega_{+,\eta_1}}|\partial_\nu w(x)|^2|\eta_1\cdot\nu|d\sigma(x)\\
&\leq \int_\Omega |Pw(x)|^2dx+2\rho \int_{\partial\Omega_{-,\eta_1}}|\partial_\nu w(x)|^2|\eta_1\cdot\nu|d\sigma(x).\end{aligned}\ee
In view of this estimate, the proof of \eqref{p2b} will be completed if we can estimate the left hand side of \eqref{p2b} with the one of \eqref{p2e}. This will be possible thanks to the following Poincar\'e type of inequality.
\begin{lemma}\label{l3} There exists a constant $C>0$ depending only on $\Omega$ such that for all $w\in H^1_0(\Omega)$ we have
\bel{l3a} \int_\Omega |w(x)|^2dx\leq C\int_\Omega |\eta_1\cdot\nabla w(x)|^2dx.\ee
\end{lemma}
We postpone the proof of this lemma to the end of the present demonstration. Combining \eqref{l3a} with \eqref{p2e}, we deduce easily \eqref{p2b} and \eqref{p2a} for $q=0$. Now let us consider the case $q\neq0$. For this purpose, note first that 
$$\begin{aligned}\int_\Omega e^{-2\rho x\cdot\eta_1}|(-\Delta+q)v(x)|^2dx&\geq \int_\Omega e^{-2\rho x\cdot\eta_1}\left(\frac{|\Delta v|^2}{2}-|q|^2|v|^2\right)dx\\
\ &\geq \frac{1}{2}\int_\Omega e^{-2\rho x\cdot\eta_1}|\Delta v(x)|^2dx-\norm{q}_{L^\infty(\Omega)}^2\int_\Omega e^{-2\rho x\cdot\eta_1}|v(x)|^2dx.\end{aligned}$$
Combining this with \eqref{p2a} for $q=0$, we obtain
$$\begin{aligned}&\left(\rho^2-2C\norm{q}_{L^\infty(\Omega)}^2\right)\int_\Omega e^{-2\rho x\cdot\eta_1}|v(x)|^2dx+\rho \int_{\partial\Omega_{+,\eta_1}}e^{-2\rho x\cdot\eta_1}|\partial_\nu v(x)|^2|\eta_1\cdot\nu|d\sigma(x)\\
&\leq C\left[2\int_\Omega e^{-2\rho x\cdot\eta_1}|(-\Delta+q)v(x)|^2dx+\rho \int_{\partial\Omega_{-,\eta_1}}e^{-2\rho x\cdot\eta_1}|\partial_\nu v(x)|^2|\eta_1\cdot\nu|d\sigma(x)\right].\end{aligned}$$
Therefore, fixing $\rho_2=2\sqrt{C}\norm{q}_{L^\infty(\Omega)}+\rho_1$ we deduce easily \eqref{p2a} from this estimate.\end{proof}

Now that the proof of Proposition \ref{p2} is completed, let us consider Lemma \ref{l3}.\\
\ \\
\textbf{Proof of Lemma \ref{l3}.} By density we only need to show this result for $w\in\mathcal C^\infty_0(\Omega)$. We fix $R>0$ such that $\Omega\subset B_R:=\{x\in\R^n:\ |x|<R\}$. Fixing $x\in\Omega$ and $h:s\mapsto w[(x-(x\cdot\eta_1)\eta_1)+s\eta_1]$, we deduce that
$$\begin{aligned}w(x)=w[(x-(x\cdot\eta_1)\eta_1)+(x\cdot\eta_1)\eta_1]&=h(x\cdot\eta_1)\\
\ &=\int_{-\infty}^{x\cdot\eta_1}h'(s)ds\\
\ &=\int_{-\infty}^{x\cdot\eta_1} \eta_1\cdot\nabla w[(x-(x\cdot\eta_1)\eta_1)+s\eta_1]ds.\end{aligned}$$
Using the fact that 
$$\forall s\in(-\infty,-R),\quad |(x-(x\cdot\eta_1)\eta_1)+s\eta_1|^2=|x-(x\cdot\eta_1)\eta_1|^2+s^2\geq R^2,$$
and the fact that supp$(w)\subset B_R$, we get
$$w(x)=\int_{-R}^{x\cdot\eta_1} \eta_1\cdot\nabla w[(x-(x\cdot\eta_1)\eta_1)+s\eta_1]ds.$$
Therefore, integrating this last expression with respect to $x\in\R^n$ and making a change of variable with any coordinates having an orthonormal basis containing $\eta_1$, we obtain
$$\int_{\R^n}|w(x)|^2dx=\int_{\eta_1^\bot}\int_{-R}^R |w(x'+t\eta_1)|^2dtdx'=\int_{\eta_1^\bot}\int_{-R}^R\abs{\int_{-R}^{t} \eta_1\cdot\nabla w(x'+s\eta_1)ds}^2dx'dt.$$
Here and from now on, for any $y\in\mathbb R^n$, we denote by $y^\bot$ the set $\{x\in\R^n:\ x\cdot y=0\}$.
Applying H\"older and Cauchy-Schwarz inequality, we obtain 
$$\begin{aligned}\int_{\R^n}|w(x)|^2dx&\leq \int_{\eta_1^\bot}\int_{-R}^R(t+R)\int_{-R}^{t} \abs{\eta_1\cdot\nabla w(x'+s\eta_1)}^2dsdtdx'\\
\ &\leq \int_{\eta_1^\bot}\int_{-R}^R(t+R)\int_{-R}^{R} \abs{\eta_1\cdot\nabla w(x'+s\eta_1)}^2dsdtdx'\\
\ &\leq 4R^2 \int_{\eta_1^\bot}\int_{-R}^{R} \abs{\eta_1\cdot\nabla w(x'+s\eta_1)}^2dsdx'\\
\ &\leq 4R^2\int_{\R^n}\abs{\eta_1\cdot\nabla w(x)}^2dx.\end{aligned}$$
Combining this with the fact that $w$ is supported in $\Omega$ we deduce \eqref{l3a} from this estimate.\qed 

Armed with Proposition \ref{p2}, we are now in position to complete the proof of Theorem \ref{t3}.

\textbf{Proof of Theorem \ref{t3}.} Let us assume that $\Lambda_{q_1}^*=\Lambda_{q_2}^*$. We fix $\eta_1\in\{\theta\in\mathbb S^{n-1}:\ |\eta_1-\eta|<\epsilon\}$ and we consider $\xi\in\eta_1^\bot$, $\eta_2\in\eta_1^\bot\cap\mathbb S^{n-1}$.  Applying Proposition \ref{p1}, for $\rho>\rho_2$,  we deduce the existence of $v_j\in H^2(\Omega)$, $j=1,2$, solving $-\Delta v_j+q_jv_j=0$  of the form \eqref{CGO1}-\eqref{CGO2} with $w_j\in H^2(\Omega)$ satisfying \eqref{CGO3}. Consider $y_2\in H^2(\Omega)$ solving 
$$\left\{
\begin{array}{ll}
-\Delta y_2+q_2y_2=0  & \mbox{in}\ \Omega ,
\\
y_2=v_1 &\mbox{on}\ \partial\Omega.
\end{array}
\right.$$
Then, $v=y_2-v_1$ solves
\bel{t3c}\left\{
\begin{array}{ll}
-\Delta v+q_2v=(q_1-q_2)v_1  & \mbox{in}\ \Omega ,
\\
v=0 &\mbox{on}\ \partial\Omega.
\end{array}
\right.\ee
Moreover, fixing $\phi\in H^{\frac{3}{2}}(\partial\Omega)$ defined by 
$$\phi(x)=v_1(x),\quad x\in\partial\Omega,$$
we find
\bel{t3d}\partial_\nu v=\partial_\nu y_2-\partial_\nu v_1=\Lambda_{q_2}^*\phi-\Lambda_{q_1}^*\phi=0,\quad \textrm{on }V.\ee
Therefore, multiplying \eqref{t3c} by $v_2$ and integrating by parts we get
\bel{t3e}\begin{aligned}\int_\Omega (q_1-q_2)v_1v_2dx &=\int_\Omega (-\Delta v+q_2v)v_2dx\\
\ &=\int_\Omega v(-\Delta v_2+q_2v_2)dx+\int_{U}\partial_\nu vv_2d\sigma(x)\\
\ &=\int_{U}\partial_\nu vv_2d\sigma(x),\end{aligned}\ee
where $$U=\{x\in\partial\Omega:\ \nu(x)\cdot\eta\geq 2\epsilon\}=\partial\Omega\setminus V.$$
Now let us show that
\bel{t3f}\lim_{\rho\to+\infty}\int_{U}\partial_\nu vv_2d\sigma(x)=0.\ee
For this purpose, we fix $U_1=\{x\in\partial\Omega:\ \nu(x)\cdot\eta_1\geq\epsilon\}$ and we remark that
$$\forall x\in U,\quad |\nu(x)-\eta_1|\geq |\nu(x)-\eta|-|\eta_1-\eta|\geq\epsilon,$$
which proves that $U\subset U_1$. Therefore, we have
$$\abs{\int_{U}\partial_\nu vv_2d\sigma(x)}\leq \int_{U_1}\abs{\partial_\nu vv_2}d\sigma(x).$$
Using the fact that $v_2$ takes the form \eqref{CGO2} and applying the Cauchy-Schwarz inequality
we get
\bel{t3g}\begin{aligned}\abs{\int_{U}\partial_\nu vv_2d\sigma(x)}&\leq \int_{U_1}e^{-\eta_1\cdot x}\abs{\partial_\nu v}(1+|w_2|)d\sigma(x)\\
\ &\leq C\left(\int_{U_1}e^{-2\rho\eta_1\cdot x}\abs{\partial_\nu v}^2d\sigma(x)\right)^{\frac{1}{2}}\left(1+\norm{w_2}_{L^2(\partial\Omega)}\right).\end{aligned}\ee
Recall that using local coordinates, one can check that there exists $C>0$ depending only on $\Omega$ such that
$$\norm{w_2}_{L^2(\partial\Omega)}\leq C\norm{w_2}_{L^2(\Omega)}^{\frac{1}{2}}\norm{\nabla w_2}_{L^2(\Omega)}^{\frac{1}{2}}\leq C\norm{w_2}_{L^2(\Omega)}^{\frac{1}{2}}\norm{ w_2}_{H^2(\Omega)}^{\frac{1}{2}}.$$
Therefore, applying \eqref{CGO3}, we deduce that there exists a constant $C>0$ independent of $\rho>0$ such that
$\norm{w_2}_{L^2(\partial\Omega)}\leq C$. Combining this with \eqref{t3g}, we obtain
\bel{t3h}\begin{aligned}\abs{\int_{U}\partial_\nu vv_2d\sigma(x)}\leq C\left(\int_{U_1}e^{-2\rho\eta_1\cdot x}\abs{\partial_\nu v}^2d\sigma(x)\right)^{\frac{1}{2}}.\end{aligned}\ee
On the other hand, using the fact that
$$\forall x\in U_1,\quad \nu(x)\cdot\eta_1\geq\epsilon,$$
we obtain
$$\begin{aligned}\int_{U_1}e^{-2\rho\eta_1\cdot x}\abs{\partial_\nu v}^2d\sigma(x)&\leq \int_{U_1}e^{-2\rho\eta_1\cdot x}\abs{\partial_\nu v}^2\frac{\eta_1\cdot\nu(x)}{\epsilon}d\sigma(x)\\
\ &\leq \epsilon^{-1}\int_{U_1}e^{-2\rho\eta_1\cdot x}\abs{\partial_\nu v}^2|\eta_1\cdot\nu(x)|d\sigma(x)\\
\ &\leq \epsilon^{-1}\int_{\partial\Omega_{+,\eta_1}}e^{-2\rho\eta_1\cdot x}\abs{\partial_\nu v}^2|\eta_1\cdot\nu(x)|d\sigma(x).\end{aligned}$$
Therefore, using the fact that $v\in H^1_0(\Omega)\cap H^2(\Omega)$ we can apply the Carleman estimate of Proposition \ref{p2}, with $q=q_2$,  to obtain
$$\begin{aligned}&\int_{U_1}e^{-2\rho\eta_1\cdot x}\abs{\partial_\nu v}^2d\sigma(x)\\
&\leq \rho^{-1}\epsilon^{-1}\left(\rho\int_{\partial\Omega_{+,\eta_1}}e^{-2\rho\eta_1\cdot x}\abs{\partial_\nu v}^2|\eta_1\cdot\nu(x)|d\sigma(x)\right)\\
&\leq C\rho^{-1}\epsilon^{-1}\left(\int_{\Omega}e^{-2\rho\eta_1\cdot x}\abs{-\Delta v+q_2v}^2dx+\rho\int_{\partial\Omega_{-,\eta_1}}e^{-2\rho\eta_1\cdot x}\abs{\partial_\nu v}^2|\eta_1\cdot\nu(x)|d\sigma(x)\right).\end{aligned}$$
Applying the fact that $\partial\Omega_{-,\eta_1}\subset V$, $-\Delta v+q_2v=(q_1-q_2)v_1$  and \eqref{t3d}, we get
$$\int_{U_1}e^{-2\rho\eta_1\cdot x}\abs{\partial_\nu v}^2d\sigma(x)\leq \left(\norm{q_1}_{L^\infty(\Omega)}+\norm{q_2}_{L^\infty(\Omega)}\right)C\rho^{-1}\epsilon^{-1}\int_{\Omega}e^{-2\rho\eta_1\cdot x}\abs{v_1}^2dx$$
and using \eqref{CGO1} and \eqref{CGO3}, we find
$$\begin{aligned}\int_{U_1}e^{-2\rho\eta_1\cdot x}\abs{\partial_\nu v}^2d\sigma(x)&\leq C\rho^{-1}\int_{\Omega}e^{-2\rho\eta_1\cdot x}\abs{e^{\rho\eta_1\cdot x}(1+|w_1|)}^2dx\\
\ &\leq C\rho^{-1},\end{aligned}$$
with $C$ independent of $\rho$. This proves \eqref{t3f}. Applying \eqref{t3f} and the arguments used in the proof of Theorem \ref{t2}, for $q=q_1-q_2$ extended by zero to a function of $\R^n$, we get 
\bel{t3q}\int_{\R^n} q(x)e^{-ix\cdot\xi}dx=0.\ee
Note that, following the above arguments, this property is true for any $\xi\in\eta_1^\bot$ while $\eta_1\in\{y\in\mathbb S^{n-1}:\ |y-\eta_1|<\epsilon\}$. Let us consider the following property.
\begin{lemma}\label{l4} Let $h\in L^1(\R^n)$ be compactly supported and fix the sequence $(\theta_k)_{k\in\mathbb N}$ of $\mathbb S^{n-1}$ such that 
\bel{l4a} \forall k,\ell\in\mathbb N,\quad  (k\neq\ell\Longrightarrow \theta_k\neq\theta_\ell).\ee
Then the condition
\bel{l4b}\forall k\in\mathbb N,\ \forall \xi\in \theta_k^{\bot},\quad \int_{\R^n} h(x)e^{-ix\cdot\xi}dx=0\ee
implies that $h=0$.\end{lemma}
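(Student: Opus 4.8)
The claim is that if $h\in L^1(\R^n)$ is compactly supported and its Fourier transform $\widehat h$ vanishes on the union $\bigcup_{k}\theta_k^\bot$ of countably many distinct hyperplanes through the origin, then $h\equiv 0$. Since $h$ is compactly supported, $\widehat h$ extends to an entire function on $\C^n$ (Paley--Wiener), in particular it is real-analytic on $\R^n$. The plan is to exploit analyticity: knowing $\widehat h$ on an $(n-1)$-dimensional hyperplane is not enough, but having infinitely many distinct hyperplanes gives enough directions to "fill out" a neighbourhood and then propagate by analytic continuation. Concretely, I would first reduce to showing that $\widehat h$ vanishes on an open subset of $\R^n$, and then invoke the identity theorem for the real-analytic (indeed entire) function $\widehat h$ to conclude $\widehat h\equiv 0$, hence $h=0$ by injectivity of the Fourier transform.

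**Producing an open set of zeros.** The hyperplanes $\theta_k^\bot$ all pass through $0$, so I need to use the accumulation of directions rather than a single plane. Fix any $\xi_0\in\R^n$ and a small ball $B$ around it. For each $k$, the set $\theta_k^\bot\cap B$ has zero Lebesgue measure in $\R^n$, so the union over all $k$ still has zero measure, which shows $\widehat h$ vanishes on a measure-zero set only — not directly an open set. So a purely measure-theoretic argument fails, and one genuinely must use analyticity together with the geometric structure. The cleaner route: consider the entire function $g(z)=\widehat h(z)$ on $\C^n$ and, for a fixed generic direction, restrict to a complex line. Alternatively — and this is the approach I would actually carry out — observe that since $\{\theta_k\}$ is an infinite subset of the compact sphere $\mathbb S^{n-1}$, it has an accumulation point $\theta_\infty$. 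Pick a point $\xi$ lying on $\theta_\infty^\bot$ but of the form that it is a limit of points $\xi_k\in\theta_k^\bot$; more usefully, one shows that the normals $\theta_k$ accumulate, so one can extract directions spanning $\R^n$ and conclude that the zero set of $\widehat h$, being a real-analytic variety containing infinitely many hyperplanes with distinct normals accumulating, must be all of $\R^n$ unless $\widehat h$ is identically zero. The precise mechanism: write $P=\widehat h$; its zero set $Z(P)$ is a real-analytic subvariety. A proper real-analytic subvariety of $\R^n$ has empty interior and its "tangent cone" at $0$ is a proper algebraic cone; but $Z(P)\supset\theta_k^\bot$ for all $k$ forces the tangent cone at the origin to contain infinitely many distinct hyperplanes, and a nonzero homogeneous polynomial (the lowest-order term of the Taylor expansion of $P$ at $0$) cannot vanish on infinitely many distinct hyperplanes unless it is the zero polynomial — contradiction propagated order by order shows the full Taylor series of $P$ at $0$ vanishes, hence $P\equiv 0$ by analyticity.

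**The Taylor-series argument in detail (sketch).** Expand $P(\xi)=\sum_{m\ge 0}P_m(\xi)$ near $0$ into homogeneous polynomials $P_m$ of degree $m$. For each fixed $k$, $P$ vanishes on the line $\{t\zeta:t\in\R\}$ for every $\zeta\in\theta_k^\bot$, equivalently $P|_{\theta_k^\bot}\equiv 0$; restricting to this subspace and using homogeneity, each $P_m|_{\theta_k^\bot}\equiv 0$. Thus the homogeneous polynomial $P_m$ is divisible by the linear form $\ell_k(\xi)=\theta_k\cdot\xi$ for every $k$ (a homogeneous polynomial vanishing on a hyperplane is divisible by its defining linear form, since $\C[\xi_1,\dots,\xi_n]$ is a UFD). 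Since the $\ell_k$ are pairwise non-proportional, $P_m$ is divisible by the product $\ell_1\cdots\ell_N$ for every $N$; comparing degrees, $m\ge N$ for all $N$, which is impossible for fixed $m$ unless $P_m\equiv 0$. Hence every $P_m=0$, so $P$ vanishes to infinite order at $0$; by analyticity (Paley--Wiener: $P=\widehat h$ is entire) $P\equiv 0$ on $\R^n$, and injectivity of the Fourier transform on $L^1(\R^n)$ gives $h=0$.

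**Main obstacle.** The delicate point is the divisibility step and making rigorous that "$P$ vanishes on $\theta_k^\bot$ implies each homogeneous component $P_m$ vanishes on $\theta_k^\bot$": one must be careful that $\widehat h$ is genuinely analytic (here compact support of $h$ is essential, via Paley--Wiener), and that the algebraic fact about homogeneous polynomials divisible by infinitely many non-proportional linear forms is applied correctly over $\C$. Everything else — injectivity of the Fourier transform, the identity theorem for entire functions — is standard. I expect the cleanest write-up to state the Paley--Wiener consequence, then run the finite-degree divisibility contradiction for each $P_m$.
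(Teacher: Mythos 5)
Your proposal is correct and takes essentially the same route as the paper: compact support makes $\mathcal{F}h$ entire, vanishing on the hyperplanes $\theta_k^{\bot}$ forces divisibility by arbitrarily many pairwise non-proportional linear forms $\xi\cdot\theta_k$, and a degree count kills every Taylor coefficient, whence $\mathcal{F}h\equiv 0$ and $h=0$ by injectivity of the Fourier transform. The only cosmetic difference is that the paper runs the count by restricting to lines through the origin and noting $\mathcal{F}h(t\xi)=\mathcal{O}(t^{\ell})$ for every $\ell$, while you annihilate each homogeneous component directly; these are the same computation.
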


We postpone the proof of Lemma \ref{l4} to the end of the present demonstration. According to Lemma \ref{l4}, the fact that \eqref{t3q} is fulfilled for all $\xi\in\eta_1^\bot$ for $\eta_1\in\{y\in\mathbb S^{n-1}:\ |y-\eta_1|<\epsilon\}$ implies that $q=0$. This proves that $q_1=q_2$ which completes the 
proof of Theorem \ref{t3}.\qed

Now that the proof of Theorem \ref{t3} is completed, let us consider Lemma \ref{l4}.

\textbf{Proof of Lemma \ref{l4}.} Fix $R>0$ such  that supp$(h)\subset\{y\in\R^n:\ |y|<R\}$. Note first that for all $\xi\in\R^n$, we have
$$\mathcal Fh(\xi)=\int_{\R^n}h(x)e^{-ix\cdot\xi}dx=\int_{|x|<R}h(x)\sum_{k=0}^\infty\frac{(-ix\cdot\xi)^k}{k!}dx.$$
Using the fact that
$$\forall x\in\{y\in\R^n:\ |y|<R\},\  \forall \xi\in\R^n,\quad \abs{\frac{(-ix\cdot\xi)^k}{k!}}\leq \frac{(R|\xi|)^k}{k!}$$
and 
$$\sum_{k=0}^\infty \frac{(R|\xi|)^k}{k!}=e^{R|\xi|}<\infty,$$
we deduce that
\bel{l4c}\forall \xi\in\R^n,\quad \mathcal Fh(\xi)=\sum_{k=0}^\infty\left(\int_{|x|<R}h(x)\frac{(-ix\cdot\xi)^k}{k!}dx\right).\ee
Combining this representation with \eqref{l4a}-\eqref{l4b}, we can show by iteration that for all $\ell\geq1$ we can find a complex valued real-analytic function $g_\ell:\R^n\longrightarrow \mathbb C$ such that
\bel{l4d}\forall \xi\in\R^n,\quad\mathcal Fh(\xi)=\left[\prod_{j=1}^\ell (\xi \cdot\theta_j)\right]g_\ell(\xi).\ee
Now let us fix $\xi\in\R^n$ and consider the function $P:\R\ni t\mapsto \mathcal Fh(t\xi)$. It is clear that
\bel{l4e}P(t)=\sum_{k=0}^\infty\left(\int_{|x|<R}h(x)\frac{(-ix\cdot\xi)^k}{k!}dx\right)t^k.\ee
Moreover, applying \eqref{l4d}, we deduce that
$$\begin{aligned}\forall \ell\geq 1,\ P(t)&=\left[\prod_{j=1}^\ell (t\xi \cdot\theta_j)\right]g_\ell(t\xi)\\
\ &=\left[\prod_{j=1}^\ell (\xi \cdot\theta_j)\right]g_\ell(t\xi)t^\ell=\underset{t\to0}{\mathcal O}(t^\ell).\end{aligned}$$
Combining this with \eqref{l4e} we obtain
$$\forall k\geq0,\quad \int_{|x|<R}h(x)\frac{(-ix\cdot\xi)^k}{k!}dx=0$$
and combining this with \eqref{l4c} we deduce that $\mathcal Fh(\xi)=0$. Since $\xi$  is arbitrary chosen we get
$$\forall \xi\in\R^n,\quad \mathcal Fh(\xi)=0$$
and using the injectivity of the Fourier transform we deduce that $h=0$. This completes the proof of the lemma.\qed
\subsection{Partial data result for  coefficients known close to the boundary}

In this subsection we consider an improvement of Theorem \ref{t3} for coefficients known on the neighborhood of the boundary of $\Omega$. The approach that we consider is borrowed from \cite{AU}. We fix $\gamma_1$, $\gamma_2$ two open sets of $\partial\Omega$ and we consider the subspace $H^{\frac{3}{2}}_{\gamma_1}(\partial\Omega)$ of $H^{\frac{3}{2}}(\partial\Omega)$ given by
$$H^{\frac{3}{2}}_{\gamma_1}(\partial\Omega):=\{h\in H^{\frac{3}{2}}(\partial\Omega):\ \textrm{supp}(h)\subset \gamma_1\}.$$
For $q\in\mathcal Q$, we consider the partial Dirichlet-to-Neumann map 
$$\Lambda_{\gamma_1,\gamma_2,q}:H^{\frac{3}{2}}_{\gamma_1}(\partial\Omega)\ni \phi\mapsto \partial_\nu v_{q,\phi}|_{\gamma_2},$$
with $v_{q,\phi}$ the solution of \eqref{eq2}. Then, we consider the following result.

\begin{theorem}
\label{t4}
Let $\Omega$ be connected. For $j=1,2$, let $q_j \in \mathcal Q$ be such that there exists an open set $\Omega_1\subset\Omega$, corresponding to a connected neighborhood of the boundary (i.e $\partial\Omega\subset\partial\Omega_1$), such that
\bel{t4a} q_1(x)=q_2(x),\quad x\in\Omega_1.\ee
Then we have
\bel{t4b} \Lambda_{\gamma_1,\gamma_2,q_1}=\Lambda_{\gamma_1,\gamma_2,q_2}\Longrightarrow q_1=q_2.\ee

 \end{theorem}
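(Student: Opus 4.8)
The plan is to reduce Theorem \ref{t4} to the partial-data situation already handled in Theorem \ref{t3}, by exploiting the hypothesis that $q_1 = q_2$ on a connected neighborhood $\Omega_1$ of the full boundary. The key idea borrowed from \cite{AU} is that when the coefficients agree near $\partial\Omega$, any solution of the equation in the exterior region $\Omega_1$ is governed by the \emph{same} operator for both $q_1$ and $q_2$, so one can use unique continuation to ``transport'' the Cauchy data from the small set $\gamma_2$ (resp.\ excitation from $\gamma_1$) to a much larger portion of $\partial\Omega$, effectively recovering the hypotheses of Theorem \ref{t3}. Concretely: first I would set $q = q_1 - q_2$ extended by zero outside $\Omega$, note that $\mathrm{supp}(q) \subset \Omega \setminus \Omega_1$, and fix $\eta_1 \in \mathbb S^{n-1}$; then build the complex geometric optics solutions $v_1, v_2$ of $-\Delta v_j + q_j v_j = 0$ from Proposition \ref{p1}, and consider $v = y_2 - v_1$ solving \eqref{t3c} as in the proof of Theorem \ref{t3}.

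The new ingredient is to upgrade the hypothesis $\Lambda_{\gamma_1,\gamma_2,q_1}\phi = \Lambda_{\gamma_1,\gamma_2,q_2}\phi$, which a priori only gives $\partial_\nu v = 0$ on $\gamma_2$, to the statement that $v = \partial_\nu v = 0$ on a neighborhood $V$ of the illuminated face. The argument goes as follows: pick $\phi$ supported in $\gamma_1$; then $v_1$ and $y_2$ both solve $-\Delta w + q_1 w = 0$ in $\Omega_1$ (using \eqref{t4a}), with the same Dirichlet data $\phi$ on $\partial\Omega$ and, on $\gamma_2$, the same Neumann data (since $\partial_\nu v = 0$ there). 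The difference $v$ then has vanishing Cauchy data on $\gamma_2$ and satisfies $(-\Delta + q_1)v = 0$ in the connected set $\Omega_1$; by the unique continuation principle for second-order elliptic operators, $v \equiv 0$ throughout $\Omega_1$, hence $v = \partial_\nu v = 0$ on all of $\partial\Omega$. But I must be careful: the Dirichlet data on $\partial\Omega$ need not be supported in $\gamma_1$ globally — one only controls excitation on $\gamma_1$. The resolution, following \cite{AU}, is a runtime restriction: one works with test functions $\phi$ supported in $\gamma_1$ and exploits that $v_1$ restricted to $\partial\Omega$ automatically has prescribed structure; alternatively one localizes the CGO construction so that $v_1$ is a valid Dirichlet input only where needed, and uses that on $\partial\Omega \setminus \gamma_1$ the relevant region is again covered by unique continuation from $\Omega_1$. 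Once $v = \partial_\nu v = 0$ on $\partial\Omega$ is established, the orthogonality identity \eqref{t1d}, namely $\int_\Omega (q_1 - q_2) v_1 v_2\, dx = 0$, follows exactly as before, and then the asymptotic analysis of Proposition \ref{p1} together with Lemma \ref{l4} (varying $\eta_1$ over an open subset of $\mathbb S^{n-1}$ and $\xi$ over $\eta_1^\perp$) yields $\widehat q = 0$, hence $q_1 = q_2$.

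The main obstacle, and the step I expect to require real care, is the unique continuation / Runge-approximation argument that converts the \emph{local} Cauchy data equality on $\gamma_1, \gamma_2$ into the \emph{full-boundary} statement needed to run Theorem \ref{t3}'s machinery. There are two subtleties: (i) one needs $\Omega_1$ connected and touching all of $\partial\Omega$ so that unique continuation propagates from $\gamma_2$ to every component of $\partial\Omega$; this is exactly why connectedness of $\Omega$ and of $\Omega_1$ is hypothesized. (ii) The excitation is restricted to $\gamma_1$, so the CGO solution $v_1$ cannot be used directly as a Dirichlet input unless its boundary trace is supported in $\gamma_1$ — it is not. The standard fix is to not impose $\phi = v_1|_{\partial\Omega}$ directly, but rather to observe that, because $q_1 = q_2$ near the boundary, the solution operators for $q_1$ and $q_2$ coincide in $\Omega_1$; hence for \emph{any} Dirichlet data supported in $\gamma_1$, the solutions $v_{q_1,\phi}$ and $v_{q_2,\phi}$ agree in $\Omega_1$, and one extracts the integral identity $\int_\Omega (q_1-q_2) v_{q_1,\phi} v_{q_2,\psi}\, dx = 0$ for all such $\phi, \psi$; then a density/Runge-approximation argument shows the CGO solutions can be approximated (in $L^2$ of the support of $q_1-q_2$, which lies away from $\partial\Omega$) by solutions arising from data supported in $\gamma_1$, thereby recovering \eqref{t1d}. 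Making this approximation rigorous — in particular verifying the Runge property for the relevant elliptic problem with partial boundary data — is the technical heart of the proof, and is where the arguments of \cite{AU} are genuinely needed.
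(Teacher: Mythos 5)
Your second route --- deriving the orthogonality identity $\int_\Omega (q_1-q_2)v_1v_2\,dx=0$ for solutions whose boundary traces are supported in $\gamma_1$, $\gamma_2$, observing that the integral localizes to $\Omega\setminus\Omega_1$ because of \eqref{t4a}, and then passing to general solutions (in particular the CGO solutions) by a Runge-type density argument --- is exactly the paper's strategy, and your diagnosis of the obstruction (the CGO trace is not supported in $\gamma_1$) is correct. Two small remarks: once the identity holds for \emph{all} $v_j\in S_{q_j}$ you may take CGO solutions for arbitrary $\xi\in\R^n$, so the full Fourier transform of $q_1-q_2$ vanishes directly and Lemma \ref{l4} is not needed; and your first sketch (unique continuation applied to $v=y_2-v_1$ in $\Omega_1$ from its vanishing Cauchy data on $\gamma_2$) is sound but does not remove the need for density on the $v_1$ side, so it only halves the work.

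The genuine gap is that the density statement --- the technical heart, as you say --- is asserted rather than proved, and it is precisely the content of the paper's Lemma \ref{l33}: the restrictions to $\Omega\setminus\Omega_1$ of elements of $S_{q,\gamma}$ are dense in $L^2(\Omega\setminus\Omega_1)$ among restrictions of elements of $S_q$. The proof is short and you should supply it: if not, Hahn--Banach produces $y\in L^2(\Omega\setminus\Omega_1)$ with $\langle y,v\rangle=0$ for all $v\in S_{q,\gamma}$ but $\langle y,v_0\rangle=1$ for some $v_0\in S_q$. Extend $y$ by zero, solve $-\Delta w+qw=y$ in $\Omega$ with $w=0$ on $\partial\Omega$, and integrate by parts against $v\in S_{q,\gamma}$ to conclude $\int_\gamma \partial_\nu w\,\overline v\,d\sigma=0$, hence $\partial_\nu w=0$ on $\gamma$. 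Since $y$ vanishes on $\Omega_1$, $w$ solves the homogeneous equation there with zero Cauchy data on $\gamma$; unique continuation in the connected set $\Omega_1$ (this is where connectedness of $\Omega_1$ and $\partial\Omega\subset\partial\Omega_1$ enter) forces $w=\partial_\nu w=0$ on all of $\partial\Omega$, and a final integration by parts against $v_0$ gives $\langle y,v_0\rangle=0$, a contradiction. Without this lemma (or an equivalent Runge property), the passage from data supported in $\gamma_1,\gamma_2$ to the CGO solutions is unjustified, and the proof does not close.
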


\begin{remark} In contrast to Theorem \ref{t3}, Theorem \ref{t4} is stated with arbitrary  restriction on the support of the Dirichlet input and on the location of the Neumann measurements. For instance, we can consider here problems with excitations and measurements on disjoint sets which is an interesting setting for several applications. However, in contrast to Theorem \ref{t3}, Theorem \ref{t4} requires \eqref{t4a} to be fulfilled which corresponds to the knowledge of the coefficients under consideration on a neighborhood of the boundary.

\end{remark}

For $\gamma$ an open set of $\partial\Omega$ and $q \in \mathcal Q$ we introduce the following sets
$$S_q:=\{v\in H^2(\Omega):\ -\Delta v+qv=0\},\quad S_{q,\gamma}:=\{v\in S_q:\ \textrm{supp}(v_{|\partial\Omega})\subset \gamma\}.$$
For Theorem \ref{t4}, we need to consider first the following density result.

\begin{lemma}\label{l33} The space $\{v_{|\Omega\setminus\Omega_1}:\ v\in S_{q,\gamma}\}$ is dense in $\{v_{|\Omega\setminus\Omega_1}:\ v\in S_{q}\}$ in the sense of $L^2(\Omega\setminus\Omega_1)$.\end{lemma}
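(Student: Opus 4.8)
The plan is to argue by duality (Hahn--Banach) and to reduce the density statement to a unique continuation problem from the boundary. Since $q$ is real valued, both $A:=\{v_{|\Omega\setminus\Omega_1}:\ v\in S_{q,\gamma}\}$ and $B:=\{v_{|\Omega\setminus\Omega_1}:\ v\in S_{q}\}$ are linear subspaces of $L^2(\Omega\setminus\Omega_1)$ that are stable under complex conjugation (if $v\in S_q$ then $\overline v\in S_q$, and $\overline{v_{|\partial\Omega}}$ has the same support as $v_{|\partial\Omega}$). Hence, to obtain $B\subset\overline{A}$ it is enough to show that any $f\in L^2(\Omega\setminus\Omega_1)$ with
$$\int_{\Omega\setminus\Omega_1}fv\,dx=0\quad\textrm{for all }v\in S_{q,\gamma}$$
automatically satisfies $\int_{\Omega\setminus\Omega_1}fv\,dx=0$ for every $v\in S_{q}$.

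First I would extend $f$ by zero to $\widetilde f\in L^2(\Omega)$ and, using $q\in\mathcal Q$ (so that $-\Delta+q:H^2(\Omega)\cap H^1_0(\Omega)\to L^2(\Omega)$ is invertible by the Fredholm alternative), solve
$$-\Delta w+qw=\widetilde f\ \textrm{ in }\Omega,\qquad w=0\ \textrm{ on }\partial\Omega,$$
with $w\in H^2(\Omega)\cap H^1_0(\Omega)$. Then, for $v\in S_{q,\gamma}$, Green's formula together with $-\Delta v+qv=0$ and $w_{|\partial\Omega}=0$ gives
$$0=\int_{\Omega\setminus\Omega_1}fv\,dx=\int_\Omega\widetilde f v\,dx=\int_\Omega w(-\Delta v+qv)\,dx-\int_{\partial\Omega}\partial_\nu w\,v\,d\sigma(x)=-\int_\gamma\partial_\nu w\,v\,d\sigma(x),$$
the last equality because $v_{|\partial\Omega}$ is supported in $\gamma$. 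Since $q\in\mathcal Q$ allows one to solve \eqref{eq2} for any boundary datum supported in $\gamma$, one has $\{v_{|\partial\Omega}:\ v\in S_{q,\gamma}\}=H^{\frac{3}{2}}_{\gamma}(\partial\Omega)$, which is dense in $L^2(\gamma)$; I would conclude $\partial_\nu w=0$ on $\gamma$, and hence $w=\partial_\nu w=0$ on $\gamma$.

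Next, because $\widetilde f$ vanishes on $\Omega_1$, the function $w$ solves $-\Delta w+qw=0$ in $\Omega_1$ and has vanishing Cauchy data on the non-empty relatively open set $\gamma\subset\partial\Omega\subset\partial\Omega_1$. I would extend $w$ by zero across $\gamma$ to a connected open set $\Omega_1'\supset\Omega_1$ obtained by gluing a one-sided collar of a subset of $\gamma$ lying outside $\overline\Omega$; the vanishing of the full gradient trace of $w$ on $\gamma$ makes this extension an $H^2$ weak solution of $-\Delta w+qw=0$ on $\Omega_1'$ which vanishes on an open subset, so the unique continuation property for second order elliptic operators with bounded potential forces $w\equiv0$ on $\Omega_1'$, in particular on $\Omega_1$. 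As $\Omega_1$ is a (one-sided) neighborhood of $\partial\Omega$, $w$ then vanishes identically near $\partial\Omega$, whence $\partial_\nu w=0$ on all of $\partial\Omega$. Finally, for an arbitrary $v\in S_q$, Green's formula once more gives
$$\int_{\Omega\setminus\Omega_1}fv\,dx=\int_\Omega\widetilde f v\,dx=\int_\Omega w(-\Delta v+qv)\,dx-\int_{\partial\Omega}\partial_\nu w\,v\,d\sigma(x)=0,$$
which is exactly what is needed. The main obstacle is precisely this unique continuation step: one has to justify the extension of $w$ across $\gamma$ and the propagation of the vanishing Cauchy data on the open piece $\gamma$ of $\partial\Omega_1$, through the connected set $\Omega_1$, to $w\equiv0$; this rests on the (by now standard) boundary unique continuation theorem for elliptic operators with bounded potentials and on the connectedness of $\Omega_1$ assumed in Theorem \ref{t4}.
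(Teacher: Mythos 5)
Your proposal is correct and follows essentially the same route as the paper: a Hahn--Banach/duality reduction, solving the adjoint problem $-\Delta w+qw=\widetilde f$ with $w\in H^1_0(\Omega)$, deducing $\partial_\nu w=0$ on $\gamma$ from the density of admissible boundary data, applying unique continuation from the Cauchy data on $\gamma$ through the connected set $\Omega_1$, and concluding with a second integration by parts. The only cosmetic differences are that you phrase the duality via annihilators and conjugation-stability rather than an explicit functional, and you spell out the extension-by-zero mechanism behind the boundary unique continuation step, which the paper simply handles by citing \cite{GL} and \cite{SS}.
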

\begin{proof} Assume the contrary. Then, the Hahn Banach theorem implies that there exists a continuous linear form on $L^2(\Omega\setminus\Omega_1)$ vanishing on $\{v_{|\Omega\setminus\Omega_1}:\ v\in S_{q,\gamma}\}$ but non uniformly vanishing on $\{v_{|\Omega\setminus\Omega_1}:\ v\in S_{q}\}$. This is equivalent to the existence of $y\in L^2(\Omega\setminus\Omega_1)$ and $v_0\in S_q$ such that
\bel{l33a}\forall v\in S_{q,\gamma},\quad \left\langle y,v\right\rangle_{L^2(\Omega\setminus\Omega_1)}=0,\ee
\bel{l33b} \left\langle y,v_0\right\rangle_{L^2(\Omega\setminus\Omega_1)}=1.\ee
We extend $y$ by zero to $\Omega$ and we consider $w\in H^2(\Omega)$ solving
\bel{l33c}
\left\{
\begin{array}{ll}
-\Delta w+qw=y  & \mbox{in}\ \Omega ,
\\
w=0 &\mbox{on}\ \partial\Omega.
\end{array}
\right.
\ee
Applying \eqref{l33a}, we get
$$\forall v\in S_{q,\gamma},\quad \int_\Omega (-\Delta+q)w\overline{v}dx=\left\langle y,v\right\rangle_{L^2(\Omega\setminus\Omega_1)}=0$$
and, integrating by parts, for all $v\in S_{q,\gamma}$,  we obtain
$$\int_\Omega w\overline{(-\Delta+q)v}dx-\int_{\partial\Omega}\partial_\nu w \overline{v}d\sigma(x) =\int_\Omega (-\Delta+q)w\overline{v}dx=0.$$
It follows that
\bel{l33d}\forall v\in S_{q,\gamma},\quad \int_{\gamma}\partial_\nu w \overline{v}d\sigma(x)=0.\ee
Since $q \in \mathcal Q$, for any $\phi\in\mathcal C^\infty_0(\gamma)$ there exists a unique $v_{q,\phi}\in H^2(\Omega)$ solving \eqref{eq2}. Using the fact that $v_{q,\phi}\in S_{q,\gamma}$, we deduce from \eqref{l33d} that 
$$\forall \phi\in\mathcal C^\infty_0(\gamma),\quad \int_{\gamma}\partial_\nu w \overline{\phi}d\sigma(x)=\int_{\gamma}\partial_\nu w \overline{v_{q,\phi}}d\sigma(x)=0.$$
This proves that $\partial_\nu w_{|\gamma}=0$ and combining this with \eqref{l33c} and the fact that $y_{|\Omega_1}=0$, we deduce that
$$\left\{
\begin{array}{ll}
-\Delta w+qw=0  & \mbox{in}\ \Omega_1 ,
\\
w=\partial_\nu w=0 &\mbox{on}\ \gamma.
\end{array}
\right.$$
Therefore, applying results of unique continuation for elliptic equations (e.g. \cite[Theorem 1.1]{GL} and \cite[Theorem 1]{SS}), we obtain $w=0$ on $\Omega_1$. In particular, we have $w=\partial_\nu w=0$ on $\partial\Omega$.
Therefore, integrating by parts, we find
$$\left\langle y,v_0\right\rangle_{L^2(\Omega\setminus\Omega_1)}=\int_\Omega (-\Delta+q)w\overline{v_0}dx=\int_\Omega w\overline{(-\Delta+q)v_0}dx=0,$$
which contradicts \eqref{l33b}. This completes the proof of the lemma.\end{proof}
Armed with this lemma, we are now in position to complete the proof of Theorem \ref{t4}.

\textbf{Proof of Theorem \ref{t4}.} For $j=1,2$, consider $v_j\in S_{q_j,\gamma_j}$. We consider $y_2\in H^2(\Omega)$ solving
$$\left\{
\begin{array}{ll}
-\Delta y_2+q_2y_2=0  & \mbox{in}\ \Omega ,
\\
y_2=v_1 &\mbox{on}\ \partial\Omega.
\end{array}
\right.$$
Then, $v=y_2-v_1$ solves
\bel{t4d}\left\{
\begin{array}{ll}
-\Delta v+q_2v=(q_1-q_2)v_1  & \mbox{in}\ \Omega ,
\\
v=0 &\mbox{on}\ \partial\Omega.
\end{array}
\right.\ee
Moreover, fixing $\phi\in H^{\frac{3}{2}}_{\gamma_1}(\partial\Omega)$ defined by $\phi(x)=v_1(x),\ x\in\partial\Omega,$
we find
$$\partial_\nu v_{|\gamma_2}=\Lambda_{q_2,\gamma_1,\gamma_2}\phi-\Lambda_{q_1,\gamma_1,\gamma_2}\phi=0.$$
Therefore, multiplying \eqref{t4d} by $v_2$ and integrating by parts we get
$$\begin{aligned}\int_\Omega (q_1-q_2)v_1v_2dx &=\int_\Omega (-\Delta v+q_2v)v_2dx\\
\ &=\int_\Omega v(-\Delta v_2+q_2v_2)dx-\int_{\gamma_2}\partial_\nu vv_2d\sigma(x)\\
\ &=0.\end{aligned}$$
Applying \eqref{t4a}, we deduce that, for any $v_j\in S_{q_j,\gamma_j}$, $j=1,2$, we have
$$\int_{\Omega\setminus\Omega_1} (q_1-q_2)v_1v_2dx=\int_\Omega (q_1-q_2)v_1v_2dx=0.$$
Using the density result of Lemma \ref{l33}, we deduce that, for any $v_j\in S_{q_j}$, $j=1,2$, we have
$$\int_\Omega (q_1-q_2)v_1v_2dx=\int_{\Omega\setminus\Omega_1} (q_1-q_2)v_1v_2dx=0.$$
Thus, choosing $v_j$, $j=1,2$, of the form \eqref{CGO1}-\eqref{CGO2}, with $w_j\in H^2(\Omega)$ satisfying \eqref{CGO3}, in this last identity and repeating the argument used at the end of the proof of Theorem \ref{t2}, we deduce that $q_1=q_2$.\qed

\bigskip
\vskip 1cm

\end{document}